\documentclass[12pt]{amsart}

\usepackage{amssymb}
\usepackage{amsmath}
\usepackage{amsfonts}
\usepackage{graphicx}
\usepackage{amscd}
\usepackage{xcolor}
\usepackage{float}
\usepackage{mathtools}

\newtheorem{thm}{Theorem}[section]
\newtheorem{cor}[thm]{Corollary}
\newtheorem{lema}[thm]{Lemma}
\newtheorem{prop}[thm]{Proposition}
\theoremstyle{definition}
\newtheorem{defn}[thm]{Definition}

\theoremstyle{remark}

\textwidth 15cm
\textheight 21cm
\parindent 0cm
\oddsidemargin .5cm
\evensidemargin .5cm

\def\R{\mathbb{R} }

\def\to{\rightarrow}

\def\phi{{\varphi} }

\begin{document}

\title{Problems involving the fractional $g$-Laplacian with Lack of Compactness}

\author{Sabri Bahrouni}
\address[S. Bahrouni]{LR18ES15, University of Monastir, Monastir, Tunisia}
\email{sabribahrouni@gmail.com}

\author{Hichem Ounaies}
\address[H. Ounaies]{Department of Mathematics, Faculty of Sciences,
University of Monastir, Monastir, Tunisia}
\email{hichem.ounaies@fsm.rnu.tn}

\author{Olfa Elfalah}
\address[O. Elfalah]{Department of Mathematics, Faculty of Sciences,
University of Monastir, Monastir, Tunisia}
\email{elfalaholfa328@gmail.com}

\keywords{Fractional Orlicz-Sobolev spaces, Compact embedding, Lions-type theorem, Variational Methods \\
\hspace*{.3cm} {\it 2010 Mathematics Subject Classifications}:
 46E30, 35R11, 45G05}

\begin{abstract}
In this paper we prove compact embedding of a subspace of the fractional Orlicz-Sobolev space $W^{s, G}\left(\mathbb{R}^{N}\right)$ consisting of radial functions, our target embedding spaces are of Orlicz type. Also, we prove a Lions and Lieb type results for $W^{s,G}\left(\mathbb{R}^{N}\right)$ that works together in a particular way to get a sequence whose the weak limit is non trivial. As an application,  we study the existence of solutions to Quasilinear elliptic problems in the whole space  $\mathbb{R}^N$ involving the fractional $g-$Laplacian operator, where the conjugated function $\widetilde{G}$ of $G$ doesn't satisfy the $\Delta_2$-condition.
\end{abstract}

\maketitle

\section{Introduction}

Recently, problems with lack of compactness received a great attention from researchers interested in the study of fractional and nonlocal elliptic. That's why the relevant literature looks daily increasing and numerous meaningful papers on this subject are by now available. For an
introduction to this class of problems and a comprehensive list of references we refer to the recent book \cite{Pucci-G}.\\

In this context, this paper is devoted to the study of the existence of solutions of a Quasilinear elliptic problems in the whole space  $\mathbb{R}^N,$ $N$ an integer $N\geq 3$, which will cause lack of compactness. We shall consider the following nonlinear fractional $g-$Laplacian elliptic problem
\begin{align} \label{m.equation}
\begin{cases}
(-\Delta_g)^s u +g(u)\frac{u}{|u|}=f(u)&\text{ in } \mathbb{R}^N\\
u\in W^{s,G}(\mathbb{R}^N).
\end{cases}
\end{align}

Here, the fractional $g-$Laplacian is defined as
$$
(-\Delta_g)^s u:= \,2 \text{ p.v.} \int_{\mathbb{R}^N} g\left( |D_s u|\right)\frac{D_s u}{|D_s u|} \frac{dy}{|x-y|^{N+s}},
$$
where $G$ is a Young function (see Section \ref{sec.prel} for the precise definition) such that $g=G'$ and $D_s u:=\frac{u(x)-u(y)}{|x-y|^s}$.  Clearly, when $G$ is a power function, $(-\Delta_{g})^s$ boils down to the fractional $p-$Laplacian  and to the $p-$Laplacian when $s\uparrow 1$. See \cite{FBS}.

Throughout this article we assume the Young function $G(t)=\int_0^tg(s)\,ds$ satisfies the following structural conditions:
\begin{equation} \label{G1} \tag{$G_1$}
1< p^-\leq \frac{tg(t)}{G(t)} \leq p^+<\infty ,\quad \text{for all }t>0.
\end{equation}
\begin{equation} \label{G2} \tag{$G_2$}
t\mapsto G(\sqrt{t}),\ t\in[0,\infty)  \text{ is convex}.
\end{equation}

The nonlinearity $f\colon\mathbb{R}\rightarrow\mathbb{R}$ assumed to be continuous
\begin{equation}\label{f1}\tag{$f_1$}
  \lim_{t\to 0}\frac{f(t)}{g(t)}=0,
\end{equation}
\begin{equation}\label{f2}\tag{$f_2$}
\limsup_{t\to +\infty}\frac{|f(t)|}{m(|t|)}<+\infty,
\end{equation}
where $m\colon (0,+\infty)\to \mathbb{R}$ is continuous function satisfying:
\begin{equation}\label{m1}\tag{$m_1$}
0<m^-\leq\frac{tm(t)}{M(t)}\leq m^+,\quad\text{for all } t>0,
\end{equation}
 where $p^+<m^-<m^+<p^-_*:=\frac{Np^-}{N-sp^-}$ and $M(t)=\int^t_0m(s)\ ds$ is a Young function.

There is $\theta>p^+,$ such that
\begin{equation}\label{f3}\tag{$f_3$}
0<\theta F(t)\leq f(t)t,\quad\text{for all }t\in\mathbb{R}\backslash\{0\},
\end{equation}
where
\begin{equation}\label{primitive}
    F(t)=\int_{0}^{t}f(\tau)\,d\tau.
\end{equation}

The unboundedness of the domain generally prevents the study of these types of problems by general methods of nonlinear analysis due to the lack of compactness.
In many different cases, it has been observed that by restricting to sub-spaces formed by functions respecting some symmetries of the problem, some forms of compactness were obtained.
Denote by
$$
W^{s,G}_{rad}(\mathbb{R}^{N})=\{u\in W^{s,G}(\mathbb{R}^{N}):\ u\ \text{is radially symmetric}\}.
$$
By $u$ is radially symmetric we mean, a function $u:\mathbb{R}^N\rightarrow\mathbb{R}$ satisfying $u(x)=u(y)$ for all $x,y\in\mathbb{R}^N$ such that $|x|=|y|$.
Strauss\cite{Strauss} was the first who observed that there exists an interplay between the regularity of the function and its radially symmetric property. Later on, Strauss's result was generalized in many directions, see \cite{Alves1, giovanni1, Sabri6, Fan1, Lions, daniel, napoli, Morry} for a survey of related results and elementary proofs of some of them within the framework of different spaces.\\

To the author's best knowledge, there have no works dealing with Strauss's results for the fractional Orlicz-Sobolev spaces.
Thus, the first step to deal with problem \eqref{m.equation} is clearly highlight the compactness result of Strauss. Our target embedding spaces are of Orlicz type.

\begin{thm}\label{Strauss-embedding}[{\it {\bf Strauss radial embedding}}]
Assume that \eqref{G1} hold and let $\Psi$ be a Young function verifying the $\triangle_{2}$-condition with
\begin{equation}\label{SE1}
  \lim_{|t|\rightarrow0}\frac{\Psi(t)}{G(t)}=0,
\end{equation}
and
\begin{equation}\label{SE2}
  \Psi\prec\prec G_*.
\end{equation}
Then, the embedding $$W^{s,G}_{rad}(\mathbb{R}^{N})\hookrightarrow L^{\Psi}(\mathbb{R}^{N})$$ is compact.
\end{thm}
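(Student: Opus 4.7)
The plan is to follow the classical Strauss scheme, adapted to the fractional Orlicz--Sobolev framework. Let $(u_n)\subset W^{s,G}_{rad}(\mathbb{R}^N)$ be bounded; by the reflexivity of $W^{s,G}(\mathbb{R}^N)$ under \eqref{G1}--\eqref{G2}, along a subsequence $u_n\rightharpoonup u$ in $W^{s,G}(\mathbb{R}^N)$, and radiality is preserved under weak limits. Replacing $u_n$ by $u_n-u$, it suffices to show: if $u_n\rightharpoonup 0$ in $W^{s,G}_{rad}(\mathbb{R}^N)$, then the modular $\int_{\mathbb{R}^N}\Psi(u_n)\,dx\to 0$; the $\Delta_2$-condition on $\Psi$ then upgrades this to norm convergence in $L^{\Psi}(\mathbb{R}^N)$.

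Split $\mathbb{R}^N=B_R\cup\{|x|\ge R\}$. On the ball, I would invoke the local Rellich--Kondrachov type compact embedding $W^{s,G}(B_R)\hookrightarrow L^{\Psi}(B_R)$, valid on bounded smooth domains under \eqref{SE2} and $\Psi\in\Delta_2$, obtaining $\int_{B_R}\Psi(u_n)\,dx\to 0$ for each fixed $R$.

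The key ingredient for the tail is a radial pointwise decay lemma: there exist $C=C(N,s,G)>0$ and a non-increasing function $\omega\colon[1,\infty)\to(0,\infty)$ with $\omega(r)\to 0$ as $r\to\infty$ such that
\begin{equation*}
|u(x)|\le C\,\|u\|_{W^{s,G}}\,\omega(|x|),\qquad\text{for a.e. }|x|\ge 1,
\end{equation*}
for every $u\in W^{s,G}_{rad}(\mathbb{R}^N)$. I would derive this by fixing $x$ with $|x|=r$ large, using radial symmetry to rewrite $G(|u(x)|)$ as a spherical average of $G(|u(y)|)$ with $|y|=r$, and bounding the resulting expression by the Gagliardo-type modular of $u$ integrated over pairs $(x,y)$ in a suitably chosen annular set whose measure is of order $r^{N-1}$; this produces the decay, with rate depending on the growth exponents $p^{\pm}$ from \eqref{G1}. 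Granted the lemma, for $\varepsilon>0$ use \eqref{SE1} to pick $\delta>0$ with $\Psi(t)\le\varepsilon G(t)$ whenever $|t|\le\delta$, and then choose $R$ so large that $C\sup_n\|u_n\|_{W^{s,G}}\omega(R)\le\delta$. For such $R$, $|u_n(x)|\le\delta$ on $\{|x|\ge R\}$, hence
\begin{equation*}
\int_{|x|\ge R}\Psi(u_n)\,dx \le \varepsilon\int_{|x|\ge R}G(u_n)\,dx \le C'\varepsilon,
\end{equation*}
uniformly in $n$, by the boundedness of $\|u_n\|_{W^{s,G}}$. Combining with the local part gives $\int_{\mathbb{R}^N}\Psi(u_n)\,dx\to 0$.

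The main obstacle is unquestionably the radial decay lemma in the fractional Orlicz setting. The classical Strauss proof uses the fundamental theorem of calculus along rays, a tool unavailable for fractional seminorms, and the hypothesis that only $G$ (not necessarily $\widetilde{G}$) satisfies $\Delta_2$ prevents any direct power-scaling argument. Careful extraction of a pointwise bound from the double-integral Gagliardo modular, while tracking the Orlicz growth via $p^{\pm}$, will be the principal technical work; once it is in hand, the splitting argument above proceeds as in the classical case.
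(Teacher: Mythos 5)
Your overall strategy diverges from the paper's: you split $\mathbb{R}^N$ into a ball (local compactness) plus a tail controlled by a \emph{pointwise} radial decay lemma, whereas the paper never seeks pointwise information. It instead shows that the local modulars $\int_{B_r(y)}G(u_n)\,dx$ vanish uniformly, by the counting argument with $\gamma(y)$ pairwise disjoint congruent balls on the sphere $\{|z|=|y|\}$ (giving $\int_{B_r(y)}G(u_n)\,dx\le C/\gamma(y)$ with $\gamma(y)\to\infty$), and then feeds this into the Lions-type Theorem \ref{Lions1} to obtain $u_n\to 0$ in $L^{\Psi}(\mathbb{R}^N)$.

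The gap in your proposal is the decay lemma itself, and it is not merely a technical omission: as stated, the lemma is false under hypothesis \eqref{G1} alone. A bound $|u(x)|\le C\|u\|_{W^{s,G}}\,\omega(|x|)$ for a.e.\ $|x|\ge 1$ would in particular force radial functions to be essentially bounded on exterior domains, but $W^{s,G}_{rad}(\mathbb{R}^N)$ contains essentially unbounded functions on every far annulus whenever the space does not embed locally into $L^\infty$. For instance, with $G(t)=t^p$ and $sp<1$, take a radial profile that is a sum of bumps $a_k\,\eta(r-k)\,|r-k|^{-\beta}$ with $0<\beta<1/p-s$ and amplitudes $a_k$ decaying just fast enough that $\sum_k k^{N-1}a_k^p<\infty$; the resulting function lies in $W^{s,p}_{rad}(\mathbb{R}^N)$ yet has infinite essential supremum on each annulus $\{k-\tfrac12<|x|<k+\tfrac12\}$, so no rate $\omega$ can exist. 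What survives in general is exactly the \emph{averaged} decay of $G$-modulars over unit balls (which is what your spherical-averaging heuristic actually produces, and what the paper's $\gamma(y)$ argument proves rigorously). But averaged smallness of $\int_{B_r(y)}G(u_n)\,dx$ does not let you invoke \eqref{SE1} pointwise to write $\Psi(u_n)\le\varepsilon G(u_n)$ on the tail; converting vanishing of local $G$-modulars into convergence of the $\Psi$-modular is precisely the content of the Lions lemma (Theorem \ref{Lions1}), which also handles the large values of $u_n$ via $\Psi\prec\prec G_*$. That lemma, or an equivalent substitute, is the missing ingredient; without it your tail estimate does not close.
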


A powerful tool in the proof of the previous Theorem is Lion's type result which is proved recently in \cite{Sabri7}.

\begin{thm}\label{Lions1}
Suppose that \eqref{G1} hold and let $\Psi$ be a Young function  satisfying \eqref{SE1} and \eqref{SE2}. Let $(u_{n})$ be a bounded sequence in $W^{s,G}(\mathbb{R}^{N})$ in such way that $u_n \rightharpoonup 0$ in $W^{s,G}(\mathbb{R}^{N})$ and
$$
\lim_{n\rightarrow+\infty}\left[\sup_{y\in\mathbb{R}^{N}}\int_{B_{r}(y)}G(u_{n})\,\mathrm{d}x \right]=0,
$$
for some $r>0$. Then, $u_{n}\rightarrow0$ in $L^{\Psi}(\mathbb{R}^{N})$.
\end{thm}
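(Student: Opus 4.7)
The plan is a Lions-type vanishing argument adapted to the Orlicz-Sobolev setting: cover $\mathbb R^N$ by balls $\{B_r(y_i)\}_{i\ge 1}$ of the given radius $r$ with uniformly bounded multiplicity $\theta_N$, estimate $\int_{B_r(y_i)}\Psi(u_n)\,dx$ on each ball by interpolating between the modulars of $G$ and $G_*$, and then combine a local Orlicz-Sobolev embedding (to bound the $G_*$-contribution globally) with the vanishing hypothesis on the $G$-modulars (to kill the remainder).

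First, the pointwise interpolation. From \eqref{SE1} one has $\Psi(t)\le\varepsilon G(t)$ for $t\le \delta_\varepsilon$, and from \eqref{SE2} one has $\Psi(t)\le \varepsilon G_*(\lambda t)$ for $t\ge T_{\varepsilon,\lambda}$; on the compact middle range $[\delta_\varepsilon, T_{\varepsilon,\lambda}]$ the function $G$ is bounded below so $\Psi \le K_{\varepsilon,\lambda}\,G$ there. Combining the three regimes yields
$$\Psi(t)\le \varepsilon\,G_*(\lambda t)+K_{\varepsilon,\lambda}\,G(t)\qquad\text{for all }t\ge 0.$$
Second, the local embedding. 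Since $1<p^-\le p^+<\infty$ in \eqref{G1}, $G$ is doubling and the continuous embedding $W^{s,G}(B_r)\hookrightarrow L^{G_*}(B_r)$ holds on each fixed-radius ball; equivalently there exists $\lambda_0>0$ (depending only on the uniform $W^{s,G}$-bound $M$ of $(u_n)$) with $\int_{B_r(y)}G_*(\lambda_0 u_n)\,dx\le C(M)$ for every $y\in\mathbb R^N$, and after finite-overlap aggregation $\sum_i\int_{B_r(y_i)}G_*(\lambda_0 u_n)\,dx\le \theta_N C(M)$.

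Third, I would refine Step~1 into a Hölder-type Orlicz inequality on each ball,
$$\int_{B_r(y_i)}\Psi(u_n)\,dx\le C\left(\int_{B_r(y_i)}G(u_n)\,dx\right)^{\!\sigma}\!\!\cdot\!\left(1+\int_{B_r(y_i)}G_*(\lambda_0 u_n)\,dx\right)$$
for some $\sigma>0$ coming from the strict growth gap $\Psi\prec\prec G_*$; this is the Orlicz analogue of the classical interpolation $\|u\|_{L^\alpha}^\alpha\le\|u\|_{L^q}^{\alpha(1-\theta)}\|u\|_{L^{p^*}}^{\alpha\theta}$. Pulling the supremum out of the first factor and summing over $i$ with multiplicity $\theta_N$,
$$\int_{\mathbb R^N}\Psi(u_n)\,dx\le C\left(\sup_{y\in\mathbb R^N}\int_{B_r(y)}G(u_n)\,dx\right)^{\!\sigma}\bigl(\theta_N+\theta_N C(M)\bigr),$$
and the right-hand side tends to $0$ as $n\to\infty$ by the vanishing hypothesis; since $\Psi\in\Delta_2$, convergence of the modular is equivalent to $u_n\to 0$ in $L^\Psi(\mathbb R^N)$.

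The main obstacle is producing the correct Orlicz Hölder-type inequality in Step~3. In the $L^p$ setting one applies Hölder with conjugate exponents chosen so that the $L^{p^*}$-part sums back to the bounded $W^{s,p}$-norm; transporting this to Young functions requires the doubling of $G$ (to pass freely between norm and modular formulations) and, when $\Psi$ does not have a clean power-type comparison with $G$ and $G_*$, an iteration of the interpolation in Step~1 to reach all admissible $\Psi$. Note that the weak convergence $u_n\rightharpoonup 0$ is used only to guarantee boundedness of $(u_n)$ in $W^{s,G}(\mathbb R^N)$; the hypothesis is otherwise superfluous for the modular estimate itself.
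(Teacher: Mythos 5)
Your Steps 1 and 2 are sound: the three--regime pointwise bound $\Psi(t)\le \varepsilon G_*(\lambda t)+K_{\varepsilon,\lambda}G(t)$ does follow from \eqref{SE1}--\eqref{SE2}, and the global embedding $W^{s,G}(\mathbb{R}^N)\hookrightarrow L^{G_*}(\mathbb{R}^N)$ of Proposition \ref{ceb} together with Lemma \ref{ineq1} gives a uniform bound on $\int_{\mathbb{R}^N}G_*(\lambda_0|u_n|)\,dx$ for a suitable fixed $\lambda_0>0$. But the proof stands or falls with Step 3, and Step 3 is not proved: the multiplicative ``H\"older-type'' inequality with a power $\sigma>0$ of the local $G$-modular is precisely the point where the classical Lions argument exploits the homogeneity of $L^p$-norms (interpolation with exponents chosen so that the local Sobolev norm enters with exactly the power that makes the sum over balls collapse to the global norm). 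For a Young function $\Psi$ related to $G$ and $G_*$ only through \eqref{SE1}--\eqref{SE2} there is no such power-type comparison available, and the additive estimate of Step 1 cannot be upgraded to it by ``iteration'': integrating Step 1 and summing over the cover only yields $\int_{\mathbb{R}^N}\Psi(|u_n|)\,dx\le \varepsilon C+K_{\varepsilon,\lambda}\int_{\mathbb{R}^N}G(|u_n|)\,dx$, whose last term is bounded but does not tend to zero. You flag this inequality yourself as ``the main obstacle'', which means the heart of the theorem is being assumed rather than established; note also that the paper does not prove Theorem \ref{Lions1} internally but quotes it from \cite{Sabri7}, so the missing inequality cannot be discharged by pointing to an argument already present in the text.

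Moreover, even granting your Step 3 inequality exactly as written, the aggregation step fails: a cover of $\mathbb{R}^N$ by balls of fixed radius $r$ is countably infinite, so $\sum_i\bigl(1+\int_{B_r(y_i)}G_*(\lambda_0 u_n)\,dx\bigr)$ diverges, since $\sum_i 1=\infty$; finite overlap controls $\sum_i\int_{B_r(y_i)}G_*(\lambda_0 u_n)\,dx$ by $\theta_N\int_{\mathbb{R}^N}G_*(\lambda_0 u_n)\,dx$, not the number of balls, so the claimed bound $\theta_N+\theta_N C(M)$ is incorrect. The per-ball estimate must be purely multiplicative, with the second factor a quantity that is superadditive over the cover (as the local $H^1$-norm squared is in the classical proof); any additive constant per ball ruins the summation. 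To complete your route you would need to state and prove (or precisely cite) an Orlicz interpolation inequality of the form $\int_{B}\Psi(|u|)\,dx\le C\bigl(\int_{B}G(|u|)\,dx\bigr)^{\sigma}\,Q_B(u)$ with $\sigma>0$ and $\sum_i Q_{B_i}(u_n)\le C(M)$, which is a genuinely nontrivial statement in the non-homogeneous Orlicz setting and is the actual content of the theorem. (Your side remark that the weak convergence $u_n\rightharpoonup 0$ only enters through boundedness is harmless and correct.)
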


Problem \eqref{m.equation} has a clear variational structure, indeed its weak solutions (witch will be defined later in section 2) are critical points of the following energy functional defined on $ W^{s,G}(\mathbb{R}^{N})$ by

\begin{equation}\label{functional}
  \mathcal{T}(u)=\mathcal{J}(u)+\mathcal{I}(u)-\mathcal{F}(u)
\end{equation}
where
\begin{equation}\label{func.1}
 \mathcal{F}(u):=\int_{\R^N}F(u)\,dx,\quad F\ \text{ is given in}\ \eqref{primitive}
\end{equation}
and
\begin{equation} \label{func.2}
\mathcal{J}(u):=\iint_{\R^N\times\R^N} G(|D_s u|)\frac{dxdy}{|x-y|^N},\qquad \mathcal{I}(u):=\int_{\R^N} G(|u|)\,dx.
\end{equation}

Our first main result for Problem \eqref{m.equation} is the following

\begin{thm}\label{m.r.1}
Let hypotheses  \eqref{G1}-\eqref{G2} and \eqref{f1}-\eqref{f3} hold. Then, problem \eqref{m.equation} has a nontrivial weak solution.
\end{thm}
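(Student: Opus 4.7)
The plan is to apply a Mountain Pass argument to the energy functional $\mathcal{T}$, and then exploit Theorem \ref{Lions1} (the Lions-type result) together with translation invariance to recover enough compactness to produce a nontrivial critical point. The failure of $\Delta_2$ for $\widetilde{G}$ does not spoil the $C^1$ regularity of $\mathcal{T}$ on $W^{s,G}(\mathbb{R}^N)$ (which uses only $\Delta_2$ for $G$, guaranteed by the upper bound in $(G_1)$), but it will complicate the last step.

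\textbf{Step 1: Mountain pass geometry.} From $(f_1)$ one has $|F(t)|\le \varepsilon G(t)$ for $|t|$ small, while $(f_2)$ combined with $(m_1)$ yields $|F(t)|\le C_\varepsilon M(t)$ for $|t|$ large, so globally $|F(t)|\le \varepsilon G(t)+C_\varepsilon M(t)$. Since $p^+<m^-\le m^+<p^-_\ast$, one has the continuous embedding $W^{s,G}(\mathbb{R}^N)\hookrightarrow L^M(\mathbb{R}^N)$, which together with the modular/norm equivalence in $W^{s,G}$ yields $\mathcal{T}(u)\ge \alpha>0$ on a small sphere $\{\|u\|=\rho\}$. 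For the far side, $(f_3)$ is the Ambrosetti--Rabinowitz condition and gives $F(t)\ge c|t|^\theta-c'$; combining this with the $p^+$-upper bound from $(G_1)$, which yields $\mathcal{J}(tu_0)+\mathcal{I}(tu_0)\le C t^{p^+}(\mathcal{J}(u_0)+\mathcal{I}(u_0))$ for $t\ge 1$, and using $\theta>p^+$, one concludes $\mathcal{T}(tu_0)\to -\infty$ for any fixed $u_0\ne 0$.

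\textbf{Step 2: Bounded Palais--Smale sequence.} The Mountain Pass Theorem produces $(u_n)\subset W^{s,G}(\mathbb{R}^N)$ with $\mathcal{T}(u_n)\to c>0$ and $\mathcal{T}'(u_n)\to 0$. The standard computation
\begin{equation*}
\mathcal{T}(u_n)-\tfrac{1}{\theta}\langle\mathcal{T}'(u_n),u_n\rangle\ge \Bigl(\tfrac{1}{p^+}-\tfrac{1}{\theta}\Bigr)\bigl(\mathcal{J}(u_n)+\mathcal{I}(u_n)\bigr),
\end{equation*}
combined with $(G_1)$ and $(f_3)$, forces $(\mathcal{J}+\mathcal{I})(u_n)$ to be bounded, hence $(u_n)$ is bounded in $W^{s,G}(\mathbb{R}^N)$.

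\textbf{Step 3: Non-vanishing via the Lions-type result.} Suppose, towards a contradiction, that $(u_n)$ vanishes in the sense $\lim_n \sup_{y\in\mathbb{R}^N}\int_{B_r(y)} G(u_n)\,dx=0$. Applied with $\Psi=M$ (which satisfies $\Delta_2$, \eqref{SE1} by $(f_1)$ and $(m_1)$, and \eqref{SE2} by $m^+<p^-_\ast$), Theorem \ref{Lions1} gives $u_n\to 0$ in $L^M(\mathbb{R}^N)$. Using $|f(t)t|\le \varepsilon G(t)+C_\varepsilon M(t)$ (from $(f_1),(f_2),(G_1),(m_1)$) together with boundedness in $L^G$ and convergence in $L^M$, one obtains $\int_{\mathbb{R}^N} f(u_n)u_n\,dx\to 0$. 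Feeding this into $\langle \mathcal{T}'(u_n),u_n\rangle=o(1)$ and using $g(t)t\ge p^- G(t)$ yields $(\mathcal{J}+\mathcal{I})(u_n)\to 0$, hence $\mathcal{T}(u_n)\to 0$, contradicting $c>0$. Therefore there exist $(y_n)\subset\mathbb{R}^N$ and $\beta>0$ with $\int_{B_r(y_n)} G(u_n)\,dx\ge \beta$. Setting $v_n(x):=u_n(x+y_n)$, the translation invariance of $\mathcal{T}$ ensures that $(v_n)$ is still a bounded PS sequence at level $c$, and local compactness of $W^{s,G}(B_r)\hookrightarrow L^G(B_r)$ together with the lower bound $\int_{B_r(0)} G(v_n)\,dx\ge \beta$ guarantees that, up to a subsequence, $v_n\rightharpoonup v\ne 0$ in $W^{s,G}(\mathbb{R}^N)$.

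\textbf{Step 4: Passing to the limit.} The main obstacle is to show $\mathcal{T}'(v)=0$ by identifying the weak limit of the nonlocal term $g(|D_s v_n|)\frac{D_s v_n}{|D_s v_n|}$ in duality with test functions in $C_c^\infty(\mathbb{R}^N)$, \emph{precisely} because $\widetilde{G}$ fails $\Delta_2$ so that $L^{\widetilde{G}}$ is not reflexive and routine weak-convergence arguments break down. I would extract a further subsequence so that $D_s v_n \to D_s v$ a.e.\ in $\mathbb{R}^N\times\mathbb{R}^N$, apply a Brezis--Lieb / Fatou argument in the modular sense (using the $\Delta_2$-property of $G$ itself to ensure good modular control on bounded subsets of $L^G$), and handle the subcritical term via $f(v_n)\to f(v)$ in $L^{\widetilde{M}}_{\mathrm{loc}}$ from $(f_1)$--$(f_2)$ and local strong convergence in $L^M$. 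This yields $\mathcal{T}'(v)=0$, so the nontrivial $v$ is the desired weak solution.
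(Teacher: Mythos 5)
Your route is genuinely different from the paper's. The paper never uses translations or a concentration--compactness dichotomy for Theorem \ref{m.r.1}: it restricts $\mathcal{T}$ to the radial subspace $W^{s,G}_{rad}(\mathbb{R}^N)$, where the Strauss embedding (Theorem \ref{Strauss-embedding}) is \emph{compact} into $L^{M}$ and $L^{G}$; a bounded PS sequence then converges weakly, the lower-order terms converge strongly, and the $(S_+)$ property of $(\mathcal{J}+\mathcal{I})'$ (which is where hypothesis \eqref{G2} enters, via \cite[Lemma 4.9]{Sabri2}) upgrades this to strong convergence in $W^{s,G}$. The mountain pass theorem then gives a critical point of $\mathrm{T}=\mathcal{T}|_{W^{s,G}_{rad}}$ at a positive level, and Palais' principle of symmetric criticality shows it is a critical point of $\mathcal{T}$ on all of $W^{s,G}(\mathbb{R}^N)$. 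Your Steps 1--3 are essentially sound and parallel the paper's estimates (your non-vanishing argument via Theorem \ref{Lions1} is correct as far as it goes, since vanishing forces $u_n\to 0$ in $L^G_{loc}$ and hence $u_n\rightharpoonup 0$, so the hypothesis of that theorem is met).

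The genuine gap is Step 4. Having only a weakly convergent translated sequence $v_n\rightharpoonup v\neq 0$ with $\mathcal{T}'(v_n)\to 0$, the identification $\mathcal{T}'(v)=0$ is exactly the hard part of the concentration--compactness route, and you only gesture at it: you would need to actually prove a.e.\ convergence of $D_s v_n$ and then justify that $g(|D_s v_n|)\frac{D_s v_n}{|D_s v_n|}$ converges weakly in $L^{\widetilde G}(d\mu)$ to $g(|D_s v|)\frac{D_s v}{|D_s v|}$ (a bounded-plus-a.e.-convergent argument in a reflexive Orlicz space, or a monotonicity/$(S_+)$ argument), none of which is carried out. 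Moreover, your stated reason for why this step is delicate is wrong in this setting: under \eqref{G1} one has $p^->1$, so \emph{both} $G$ and $\widetilde G$ satisfy the $\Delta_2$-condition and $W^{s,G}(\mathbb{R}^N)$, $L^{\widetilde G}$ are reflexive --- the non-reflexive scenario you describe belongs to Theorem \ref{m.r.2} under \eqref{G1p}, not to Theorem \ref{m.r.1}. Finally, your proposal never invokes \eqref{G2}; since that hypothesis is part of the statement and is what the paper uses to make the limit passage work (through the $(S_+)$ property), its absence from your argument is a signal that the decisive analytic step has not actually been supplied.
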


We prove the existence result stated in Theorem \ref{m.r.1}, by applying the Mountain Pass Theorem (see \cite{Amb-Rab}) to the energy functional $\mathcal{T}$ defined in \eqref{functional}, we get a critical point $u\in W^{s,G}_{rad}(\R^N)$. Than using the Principle of Symmetric Criticality by Palai (see \cite{Palai}) we see that $ W^{s,G}_{rad}(\R^N)$  is a natural constraint for the functional $\mathcal{T},$ i.e. critical points of $\mathcal{T}$ constrained on $ W^{s,G}_{rad}(\R^N)$  are actually critical points of $\mathcal{T}$ on $ W^{s,G}(\R^N).$

\vskip1cm
In the second part of the paper, we intend to work with a situation that:

\begin{itemize}
  \item  $\tilde{G}$, the complementary function of $G$, does not satisfy the $\Delta_2$-condition which implies that the space $W^{s,G}\left(\mathbb{R}^{N}\right)$ is not reflexive any more (which is the case $p^-=1$);
  \item the function $t\mapsto G(\sqrt{t})$ does not necessarily to be convex.
\end{itemize}

But in this way, we will encounter some new challenges when dealing with our problem, and we need to extend the assumption \eqref{G1} to the following one
\begin{equation} \label{G1p} \tag{$G_1^{\prime}$}
1\leq p^-\leq \frac{tg(t)}{G(t)} \leq p^+<\infty, \quad \text{for all } t>0.
\end{equation}
Also, a new version  of Lions type result for fractional Orlicz-Sobolev space $W^{s,G}\left(\mathbb{R}^{N}\right)$ need to be established, whose the Young function $G$ does not need to satisfy the $\Delta_2$-condition.

\begin{thm}[{\it {\bf A Lions type result}}]\label{Lions2}
Let $G$ and $\Psi$ be Young functions such that
$$\lim\limits_{\substack{ t\to 0}}\frac{\Psi\left(\vert t\vert\right)}{G\left(\vert t\vert\right)}=\lim\limits_{\substack{ t\to \infty}}\frac{\Psi\left(\vert t\vert\right)}{G_{*}\left(\vert t\vert\right)}=0,$$
where $G_{*}$ is the Sobolev conjugate function of $G$ $($see section $2)$. Let $(u_n) \subset W^{s,G}(\mathbb{R}^{N})$ is a sequence such that
$$
\left(\int_{\mathbb{R}^{N}} G\left(\vert u_n\vert\right)dx\right)_n\quad\text{and}\quad \left(\int_{\mathbb{R}^{N}} G_{*}\left(\vert u_n\vert\right)dx\right)_n
$$ are bounded, and for each $\tau > 0$ we have
\begin{equation} \label{lim}
\operatorname{mes}\left(\left[\left|u_{n}\right|>\tau\right]\right)\rightarrow 0, \quad \text{as } n \rightarrow +\infty,
\end{equation}
then
$$ \left(\int_{\mathbb{R}^{N}} \Psi\left(\vert u_n\vert\right)dx\right)\rightarrow 0, \quad \text{as } n \rightarrow +\infty.$$
\end{thm}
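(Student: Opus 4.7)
The plan is to decompose the integral $\int_{\mathbb{R}^N}\Psi(|u_n|)\,dx$ into three pieces, tailored to the three hypotheses in play: the comparison $\Psi\ll G$ near zero controls the piece where $u_n$ is small, the comparison $\Psi\ll G_{*}$ near infinity controls the piece where $u_n$ is large, and the measure-vanishing hypothesis \eqref{lim} controls the intermediate piece.

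More precisely, fix $\eta>0$. By $\lim_{t\to 0}\Psi(t)/G(t)=0$, choose $\varepsilon>0$ such that $\Psi(t)\le \eta\,G(t)$ for all $0<t\le\varepsilon$; by $\lim_{t\to\infty}\Psi(t)/G_{*}(t)=0$, choose $R>\varepsilon$ such that $\Psi(t)\le \eta\,G_{*}(t)$ for all $t\ge R$. Then split
\begin{equation*}
\int_{\mathbb{R}^N}\Psi(|u_n|)\,dx = A_n+B_n+C_n,
\end{equation*}
where
\begin{equation*}
A_n=\int_{\{|u_n|\le\varepsilon\}}\Psi(|u_n|)\,dx,\quad B_n=\int_{\{\varepsilon<|u_n|\le R\}}\Psi(|u_n|)\,dx,\quad C_n=\int_{\{|u_n|>R\}}\Psi(|u_n|)\,dx.
\end{equation*}
By the choice of $\varepsilon$ and $R$, and by the two boundedness assumptions,
\begin{equation*}
A_n\le \eta\int_{\mathbb{R}^N}G(|u_n|)\,dx\le \eta\,C_1,\qquad C_n\le \eta\int_{\mathbb{R}^N}G_{*}(|u_n|)\,dx\le \eta\,C_2,
\end{equation*}
for some constants $C_1,C_2$ independent of $n$. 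For the middle term, since $\Psi$ is non-decreasing,
\begin{equation*}
B_n\le \Psi(R)\,\operatorname{mes}\bigl([|u_n|>\varepsilon]\bigr),
\end{equation*}
which tends to $0$ as $n\to\infty$ by hypothesis \eqref{lim}. Combining these estimates gives $\limsup_{n}\int_{\mathbb{R}^N}\Psi(|u_n|)\,dx\le (C_1+C_2)\,\eta$, and letting $\eta\downarrow 0$ yields the conclusion.

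I do not expect any serious obstacle: the three hypotheses are engineered precisely so that the three-region decomposition closes. The only point requiring mild care is verifying that $\Psi$ is indeed finite on the bounded interval $[\varepsilon,R]$, so that $\Psi(R)<\infty$, which is automatic from $\lim_{t\to\infty}\Psi(t)/G_{*}(t)=0$ together with the finiteness of $G_{*}$ on finite arguments. Otherwise the argument is a clean, classical concentration-of-level-sets computation, and no use of the $\Delta_2$-condition is needed—exactly the point the authors wish to emphasize for later applications in the non-reflexive setting.
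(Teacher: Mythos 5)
Your proof is correct and follows essentially the same route as the paper's: both split $\int_{\mathbb{R}^N}\Psi(|u_n|)\,dx$ over the three level sets $[|u_n|\le\varepsilon]$, $[\varepsilon<|u_n|\le R]$, $[|u_n|>R]$, control the outer two pieces by the limits of $\Psi/G$ at $0$ and $\Psi/G_*$ at $\infty$ together with the boundedness of the modulars, and control the middle piece by $\Psi(R)\,\operatorname{mes}([|u_n|>\varepsilon])\to 0$. The only cosmetic difference is that the paper normalizes the constants to $\epsilon/(3M_i)$ in advance instead of your $\limsup$-then-let-$\eta\downarrow 0$ formulation.
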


  Our next result is a Lieb type result for fractional Orlicz-Sobolev space $W^{s, G}\left(\mathbb{R}^{N}\right)$ that works together with the Lions type result, in a particular way to get a sequence whose the weak
limit is non trivial. We want to remark that in \cite{Alves-Marcos} the author proved
a version for Orlicz-Sobolev spaces, but his approach does not include fractional
versions of these spaces.

\begin{thm}[{\it {\bf A Lieb type result}}]\label{Lieb}
   Let $G \in C^{1}\left([0,+\infty)\right)$ be a Young function satisfying \eqref{G1p} and $\left(u_{n}\right) \subset$ $W^{s, G}\left(\mathbb{R}^{N}\right)$ such that $\Phi_{s,G,\mathbb{R}^{N}}(u_n)^{1/p^{\pm}} \leq M$ $($where $\Phi_{s,G,\mathbb{R}^{N}}$ will be defined in section $2)$. Let $\tau,\ \delta>0$ such that
$$
\operatorname{mes}\left( \left[\left|u_{n}\right|>\tau\right]\right) \geq \delta, \quad \text{for all }n \in \mathbb{N},
$$
then there is $\left(y_{n}\right) \subset \mathbb{Z}^{N}$ such that $v_{n}(x)=u_{n}\left(x+y_{n}\right)$ has a subsequence whose its limit in $L_{l o c}^{G}\left(\mathbb{R}^{N}\right)$ is non trivial.
\end{thm}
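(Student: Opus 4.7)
The plan is to tile $\mathbb{R}^N$ by unit cubes $Q_z = z + [0,1)^N$, $z \in \mathbb{Z}^N$, and reduce the theorem to the quantitative concentration estimate
\[
\liminf_n \beta_n > 0, \qquad \beta_n := \sup_{z \in \mathbb{Z}^N} \int_{Q_z} G(|u_n|)\,dx.
\]
Granting this, pick $y_n \in \mathbb{Z}^N$ with $\int_{Q_{y_n}} G(|u_n|)\,dx \geq \beta_n/2 \geq c > 0$ and set $v_n(x) = u_n(x+y_n)$. The bound $\Phi_{s,G,\mathbb{R}^N}(v_n)^{1/p^{\pm}} \leq M$ is preserved by translation, and the compact local embedding $W^{s,G}(Q_0) \hookrightarrow\hookrightarrow L^{G}(Q_0)$ (available because $G$ satisfies $\Delta_2$ thanks to \eqref{G1p}) extracts a subsequence $v_n \to v$ in $L^G(Q_0)$. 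Modular continuity then gives $\int_{Q_0} G(|v|)\,dx = \lim_n \int_{Q_0} G(|v_n|)\,dx \geq c > 0$, so $v \not\equiv 0$, as required.

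The heart of the argument is to rule out the vanishing alternative $\beta_n \to 0$. First reduce to the case $u_n \rightharpoonup 0$ in $W^{s,G}(\mathbb{R}^N)$: if the weak limit $u$ of some subsequence is nonzero, then $y_n = 0$ together with the compact local embedding already exhibits $v = u \neq 0$. Assume now $\beta_n \to 0$ along a subsequence. A covering argument (each ball $B_r(y)$ meets at most $C_{N,r}$ unit cubes) upgrades this to the Lions-style vanishing
\[
\sup_{y \in \mathbb{R}^N}\int_{B_r(y)} G(|u_n|)\,dx \to 0.
\]
Theorem \ref{Lions1} then applies with any Young function $\Psi$ with $\lim_{t\to 0}\Psi(t)/G(t)=0$ and $\Psi \prec\prec G_{*}$, for instance $\Psi(t)=t^q$ with $q\in(p^+,p_{*}^{-})$, and yields $u_n \to 0$ in $L^{\Psi}(\mathbb{R}^N)$, hence $\int_{\mathbb{R}^N}\Psi(|u_n|)\,dx \to 0$. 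This contradicts
\[
\int_{\mathbb{R}^N}\Psi(|u_n|)\,dx \;\geq\; \Psi(\tau)\operatorname{mes}(A_n) \;\geq\; \Psi(\tau)\delta \;>\; 0,
\]
since $|u_n|>\tau$ on $A_n := [|u_n|>\tau]$.

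The main obstacle is the borderline case $p^-=1$, in which $\widetilde{G}$ may fail $\Delta_2$ and $W^{s,G}(\mathbb{R}^N)$ is not reflexive---precisely the regime this second half of the paper is concerned with---so Theorem \ref{Lions1} is not directly applicable. In that case I would replace the Lions step by a direct interpolation argument: combine the two elementary Chebyshev bounds $\operatorname{mes}(A_n \cap Q_z)\leq G(\tau)^{-1}\int_{Q_z} G(|u_n|)$ and $\operatorname{mes}(A_n \cap Q_z)\leq G_{*}(\tau)^{-1}\int_{Q_z} G_{*}(|u_n|)$ into
\[
\operatorname{mes}(A_n \cap Q_z) \leq C(\tau)\left(\int_{Q_z} G(|u_n|)\right)^{\theta}\left(\int_{Q_z} G_{*}(|u_n|)\right)^{1-\theta},
\]
insert $\int_{Q_z} G(|u_n|) \leq \beta_n$ to extract a surviving factor $\beta_n^{\eta}$, $\eta>0$, and close by a discrete Hölder summation over $z$ against the globally finite modulars $\int_{\mathbb{R}^N} G(|u_n|) \leq M$ and $\int_{\mathbb{R}^N} G_{*}(|u_n|)\leq C$ (the latter finite by the fractional Orlicz--Sobolev embedding encoded in the hypothesis $\Phi_{s,G,\mathbb{R}^N}(u_n)^{1/p^{\pm}}\leq M$). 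The exponent bookkeeping in this summation---ensuring that neither the count of active cubes nor the total bounded modular swallows $\beta_n^{\eta}$---is the delicate technical point.
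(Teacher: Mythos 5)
Your overall architecture (tile by unit cubes, show the cube-wise concentration $\liminf_n\beta_n>0$, translate to the good cube, conclude by the compact local embedding) is a genuinely different route from the paper's. The paper does not argue by dichotomy at all: its key step is Lemma \ref{LemmaLieb}, a quantitative statement proved by (i) pigeonholing a cube $K(y_0)$ on which the local Gagliardo modular is dominated by the local modular of $G(|u/2|)$ up to a factor controlled by the global data, (ii) showing $G(|u/2|)\in W^{s',1}(K(y_0))$ with controlled seminorm, and (iii) applying $W^{s',1}(K(y_0))\hookrightarrow L^{1^*}(K(y_0))$ plus H\"older to bound $\operatorname{mes}[K(y_0)\cap\operatorname{supp}(u)]$ from below. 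Applied to $(|u_n|-\tau/2)^+$, this gives a single cube carrying a definite amount of the level set, uniformly in $n$ and without any reflexivity or weak-limit extraction. Your first branch (via Theorem \ref{Lions1}) is workable when $p^->1$, though note two points: Theorem \ref{Lions1} needs $(u_n)$ bounded in the full $W^{s,G}$-norm and a weakly convergent subsequence, whereas the theorem's hypothesis only bounds the Gagliardo modular $\Phi_{s,G,\mathbb{R}^N}(u_n)$, so you must either add or derive a bound on $\int_{\mathbb{R}^N}G(|u_n|)\,dx$.

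The genuine gap is your fallback for $p^-=1$, which is the case this theorem exists for (it feeds Theorem \ref{m.r.2}). The interpolation
\[
\operatorname{mes}(A_n\cap Q_z)\le C(\tau)\Bigl(\int_{Q_z}G(|u_n|)\Bigr)^{\theta}\Bigl(\int_{Q_z}G_*(|u_n|)\Bigr)^{1-\theta}
\]
followed by extracting $\beta_n^{\eta}$ leaves you summing $\sum_z a_z^{\theta-\eta}b_z^{1-\theta}$ with exponents adding to $1-\eta<1$; over counting measure H\"older goes the wrong way there, and the sum is controlled only if you also control the number of active cubes, which you do not. The failure is not just bookkeeping: the only information your vanishing step retains is $\sup_z a_z\to0$, $\sum_z a_z\le M$, $\sum_z b_z\le C$, and one can choose abstract arrays $(a_z,b_z)$ (e.g.\ $a_z=b_z=\beta_n$ on $\sim M/\beta_n$ cubes) satisfying all three while $\sum_z\min(a_z,b_z)$ stays bounded below; so no argument using only these facts can conclude $\operatorname{mes}(A_n)\to0$. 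What rules such configurations out is precisely the Gagliardo seminorm bound, which your interpolation step never uses; a level set of fixed total measure scattered over $\sim\beta_n^{-1}$ cubes forces the $W^{s,G}$-energy to blow up. This is exactly the input the paper injects through Claims 1 and 2 of Lemma \ref{LemmaLieb}, so to repair your proof in the non-reflexive case you would essentially have to reprove that lemma.
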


Our second main result for Problem \eqref{m.equation} is the following

\begin{thm}\label{m.r.2}
  Let $G \in C^{1}\left([0,+\infty)\right)$ be a Young function  satisfying \eqref{G1p} and $f$ satisfies \eqref{f1}-\eqref{f3} hold.  Then, problem \eqref{m.equation} has a nontrivial weak solution.
\end{thm}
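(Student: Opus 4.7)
The plan is to obtain the solution as a limit of a translated Cerami sequence produced by the Mountain Pass Theorem, using the Lions-type result (Theorem \ref{Lions2}) to rule out vanishing and the Lieb-type result (Theorem \ref{Lieb}) to select translations along which the weak limit is nontrivial. The radial reduction used in the proof of Theorem \ref{m.r.1} is unavailable here because neither the convexity assumption \eqref{G2} nor the reflexivity of $W^{s,G}(\mathbb{R}^N)$ (which would follow from $\tilde G \in \Delta_2$) is being assumed; hence the Strauss compact embedding (Theorem \ref{Strauss-embedding}) cannot be invoked.

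First I would verify the Mountain Pass geometry of $\mathcal{T}$: from \eqref{f1}, \eqref{f2} and \eqref{G1p} one derives $\mathcal{T}(u)\geq c_1\min\{\|u\|^{p^-},\|u\|^{p^+}\}$ for small $\|u\|$, while \eqref{f3} together with $\theta>p^+$ forces $\mathcal{T}(tw)\to -\infty$ along a fixed direction. Ekeland's variational principle then yields a Cerami sequence $(u_n)$ at the MP level $c>0$; the Ambrosetti--Rabinowitz condition \eqref{f3} combined with \eqref{G1p} gives the modular boundedness of $(u_n)$, so that $\Phi_{s,G,\mathbb{R}^{N}}(u_n)$ and, by the Orlicz--Sobolev embedding $W^{s,G}\hookrightarrow L^{G_*}$, also $\int_{\mathbb{R}^N}G_*(|u_n|)\,dx$ are bounded.

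Next I would run the Lions--Lieb dichotomy on $(u_n)$. If for every $\tau>0$ one has $\mathrm{mes}(\{|u_n|>\tau\})\to 0$, then Theorem \ref{Lions2} applied with a Young function $\Psi$ squeezed between $G$ near $0$ and $G_*$ near $\infty$ and chosen to dominate $F(|t|)$ and $|t|\,|f(t)|$ (which is possible by \eqref{f1}--\eqref{f2}) produces $\int F(u_n),\,\int f(u_n)u_n\to 0$. The Cerami relation $\langle\mathcal{T}'(u_n),u_n\rangle\to 0$ and \eqref{G1p} then give $\mathcal{J}(u_n)+\mathcal{I}(u_n)\to 0$, hence $\mathcal{T}(u_n)\to 0$, contradicting $c>0$. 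Therefore the non-vanishing alternative holds: there exist $\tau,\delta>0$ and a subsequence with $\mathrm{mes}(\{|u_n|>\tau\})\geq\delta$. Theorem \ref{Lieb} then provides $y_n\in\mathbb{Z}^N$ and a nontrivial $u\in W^{s,G}(\mathbb{R}^N)$ such that $v_n:=u_n(\,\cdot+y_n)\to u$ in $L^{G}_{loc}(\mathbb{R}^N)$ along a subsequence.

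Finally I would show that $u$ solves \eqref{m.equation} weakly. Since the problem is invariant under $\mathbb{Z}^N$-translations, $(v_n)$ is again a bounded Cerami sequence for $\mathcal{T}$; passing to a further subsequence one may assume $v_n\to u$ a.e.\ on $\mathbb{R}^N$ and $D_s v_n\to D_s u$ a.e.\ on $\mathbb{R}^N\times\mathbb{R}^N$. I then pass to the limit in $\langle\mathcal{T}'(v_n),\phi\rangle\to 0$ for every $\phi\in C_c^\infty(\mathbb{R}^N)$: the right-hand side is controlled by \eqref{f1}--\eqref{f2} and $L^G_{loc}$-convergence, while the non-local and the lower-order $g$-term are handled by combining the a.e.\ convergence of $v_n$ and $D_s v_n$ with the modular bound, via a Brezis--Lieb/Minty-type splitting exploiting the monotonicity and convexity of $G$. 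The main obstacle is precisely this limit passage in the fractional $g$-Laplacian in the absence of reflexivity: one cannot test the nonlinear operator against weak limits in $W^{s,G}(\mathbb{R}^N)$ because $(W^{s,G})^*=L^{\tilde G}$ is not well behaved when $\tilde G\notin\Delta_2$, so the compactness needed to identify the limit must be extracted from pointwise a.e.\ information (secured by Theorem \ref{Lieb}) plus convexity, rather than from duality. Once this is done, the nontriviality of $u$ follows from the $L^G_{loc}$ convergence granted by Theorem \ref{Lieb}, and $u$ is the desired weak solution.
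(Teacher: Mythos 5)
Your overall architecture matches the paper's: mountain pass geometry, a Palais--Smale/Cerami sequence from an Ekeland-type minimax, modular boundedness via \eqref{f3} and \eqref{G1p}, the vanishing/non-vanishing dichotomy settled by Theorem \ref{Lions2} (vanishing forces $\int f(u_n)u_n\to 0$, hence $\mathcal{J}(u_n)+\mathcal{I}(u_n)\to 0$ and a contradiction with $c>0$), and Theorem \ref{Lieb} to produce translations $y_n\in\mathbb{Z}^N$ with a nontrivial $L^G_{loc}$ limit. Where you diverge is in the two analytic points that the lack of reflexivity actually makes delicate, and this is where your proposal has a genuine gap. The paper does not try to pass to the limit in $\langle\mathcal{T}'(v_n),\phi\rangle$ for the nonlinear nonlocal operator at all. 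Instead it works in Szulkin's framework: since $\Phi=\mathcal{J}+\mathcal{I}$ is convex and lower semicontinuous but not $C^1$ (only $\mathcal{F}$ is $C^1$), the critical point notion is the variational inequality $\Phi(v)-\Phi(u)\geq\int f(u)(v-u)\,dx$ for all $v$, the minimax theorem (Theorem \ref{MPT.cerami} and Corollary \ref{cor.MPT}) delivers a sequence satisfying the perturbed inequality \eqref{sequencia2}, and the limit passage uses only the weak lower semicontinuity of the convex functional $\Phi$ together with the convergence of the $\mathcal{F}'$-term. Gâteaux differentiability of $\Phi$ (available because $G\in\Delta_2$) then upgrades the inequality to the weak Euler--Lagrange equation (Lemma \ref{weak.solution.w}). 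No Minty argument, no a.e.\ convergence of $g(|D_s v_n|)$, and no duality with $L^{\tilde G}$ is ever needed.

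By contrast, the step you flag as ``the main obstacle'' --- identifying the limit of $g(|D_s v_n|)\frac{D_s v_n}{|D_s v_n|}$ via a Brezis--Lieb/Minty splitting from pointwise convergence plus convexity --- is precisely the step you do not carry out, and it is not clear it can be carried out as stated: to pass to the limit in $\iint g(|D_s v_n|)\frac{D_s v_n}{|D_s v_n|}D_s\phi\,d\mu$ from a.e.\ convergence you need some uniform integrability of $g(|D_s v_n|)$ in $L^{\tilde G}$, and exactly when $\tilde G\notin\Delta_2$ the bound $\tilde G(g(t))\leq G(2t)$ no longer yields norm control from modular control, so this is not a routine Vitali argument. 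You should adopt the convex-analysis route: define critical points through the subdifferential inequality, obtain the PS sequence in that sense, and let lower semicontinuity of $\Phi$ do the work. A second, smaller point: at the end you must make explicit that the translated sequence $v_n=u_n(\cdot+y_n)$ is again a PS sequence at the same level (by $\mathbb{Z}^N$-invariance of $\mathcal{T}$), so that the nontrivial $L^G_{loc}$ limit furnished by Theorem \ref{Lieb} is itself the weak solution; you do say this, and it is in fact a step the paper itself leaves rather implicit.
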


\section{Preliminaries}\label{sec.prel}
In this section, we will define the fractional order Orlicz-Sobolev spaces and we introduce some technical results that will be used throughout the paper.
\subsection{Young functions}

An application $G\colon\R_{+}\to \R_{+}$ is said to be a  \emph{Young function} if it admits the integral formulation $G(t)=\int_0^t g(\tau)\,d\tau$, where the right continuous function $g$ defined on $[0,\infty)$ has the following properties:
\begin{align*}
&g(0)=0, \quad g(t)>0 \text{ for } t>0 \label{g0} \tag{$g_1$}, \\
&g \text{ is non-decreasing on } (0,\infty) \label{g2} \tag{$g_2$}, \\
&\lim_{t\to\infty}g(t)=\infty  \label{g3} \tag{$g_3$} .
\end{align*}
From these properties, it is easy to see that a Young function $G$ is continuous, nonnegative, strictly increasing and convex on $[0,\infty)$.

The following  properties on Young functions are well-known. See for instance \cite{FBS} for the proofs.

\begin{lema} \label{lema.prop}
Let $G$ be a Young function satisfying \eqref{G1} and $a,b\geq 0$. Then
\begin{align*}
  &\min\{ a^{p^-}, a^{p^+}\} G(b) \leq G(ab)\leq   \max\{a^{p^-},a^{p^+}\} G(b),\tag{$L_1$}\label{L1}\\
  &G(a+b)\leq  2^{p^+} (G(a)+G(b)),\tag{$L_2$}\label{L2}\\
	&G \text{ is Lipschitz continuous}. \tag{$L_3$}\label{L_3}
 \end{align*}
\end{lema}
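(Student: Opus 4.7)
The three estimates are classical consequences of the growth condition \eqref{G1} and the convexity of $G$; the plan is to derive them in the order $(L_1)\Rightarrow(L_2)\Rightarrow(L_3)$.

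For \eqref{L1}, the starting point is to rewrite \eqref{G1} as a two-sided bound on the logarithmic derivative. Since $G$ is absolutely continuous and positive on $(0,\infty)$ with $G'=g$, one has $\tfrac{d}{dt}\log G(t)=\tfrac{g(t)}{G(t)}$, and \eqref{G1} is equivalent to
$$p^-\leq \frac{d\log G(t)}{d\log t}\leq p^+,\qquad t>0.$$
Fix $b>0$ and consider the function $\varphi(a):=\log G(ab)-p^-\log a$ (resp.\ $\log G(ab)-p^+\log a$). A direct computation using the chain rule shows $\varphi'(a)\geq 0$ (resp.\ $\leq 0$) for all $a>0$ when $a\geq 1$, and the opposite signs when $0<a<1$. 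Integrating from $1$ to $a$ then yields, in the case $a\geq 1$,
$$a^{p^-}G(b)\leq G(ab)\leq a^{p^+}G(b),$$
and the reverse chain $a^{p^+}G(b)\leq G(ab)\leq a^{p^-}G(b)$ for $0<a\leq 1$; combining the two cases gives \eqref{L1}. (The degenerate cases $a=0$ or $b=0$ follow trivially from $G(0)=0$.)

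For \eqref{L2} I would exploit the convexity of $G$ together with \eqref{L1} applied to the factor $a=2$. Writing $a+b=\tfrac12(2a)+\tfrac12(2b)$, convexity gives
$$G(a+b)\leq \tfrac12 G(2a)+\tfrac12 G(2b),$$
and \eqref{L1} with $a=2\geq 1$ yields $G(2a)\leq 2^{p^+}G(a)$ and $G(2b)\leq 2^{p^+}G(b)$. Plugging in produces $G(a+b)\leq 2^{p^+-1}(G(a)+G(b))\leq 2^{p^+}(G(a)+G(b))$, which is \eqref{L2}.

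For \eqref{L3} the plan is to use the fact that $G$ is convex with non-decreasing derivative $g$. On any interval $[0,T]$ the function $g$ is bounded by $g(T)<\infty$, and the fundamental theorem of calculus gives $|G(t)-G(s)|=\big|\int_s^t g(\tau)\,d\tau\big|\leq g(T)|t-s|$ for $0\leq s,t\leq T$; hence $G$ is Lipschitz on every bounded subset of $\R_+$, which is the content of \eqref{L_3} (the statement being understood in the local sense, as global Lipschitzness would force $p^+\leq 1$ contradicting \eqref{G1}). The only step that requires any care is the sign analysis in the integration giving \eqref{L1}, since the direction of the inequality flips between $a\geq 1$ and $a\leq 1$; the remaining arguments are one-line applications of convexity plus the first estimate.
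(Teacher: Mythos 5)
The paper never proves this lemma in the text: it is stated as well known with a pointer to \cite{FBS}, so there is no in-house argument to compare with; your proof is the standard one (logarithmic-derivative integration for $(L_1)$, convexity plus doubling for $(L_2)$, boundedness of $g$ on compacta for $(L_3)$) and is essentially correct, with two small inaccuracies worth fixing. First, in $(L_1)$ the sign of $\varphi'$ does not flip at $a=1$: with $\varphi(a)=\log G(ab)-p^-\log a$ one computes $\varphi'(a)=\frac{1}{a}\bigl(\frac{ab\,g(ab)}{G(ab)}-p^-\bigr)\ge 0$ for \emph{all} $a>0$ (and $\le 0$ for the $p^+$ variant); the reversal of the inequalities on $0<a<1$ comes from integrating from $a$ to $1$ instead of from $1$ to $a$, i.e.\ from the monotonicity of $\varphi$, not from a change of sign of $\varphi'$. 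The two-sided bounds you then write down are correct, so this is only a wording issue. Second, in $(L_3)$ your reading of the statement as Lipschitz continuity on bounded sets is indeed the only tenable one ($G(t)=t^{p}$ with $p>1$ satisfies \eqref{G1} and is not globally Lipschitz), but the parenthetical justification should say that global Lipschitzness, i.e.\ boundedness of $g$, would force $p^-\le 1$ (since $G(t)\ge G(1)t^{p^-}$ for $t\ge 1$ while $G(t)\le t\sup g$), not $p^+\le 1$; either way it contradicts \eqref{G1}. With these cosmetic corrections the argument is complete, and your $(L_2)$ step even gives the slightly sharper constant $2^{p^+-1}$.
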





The \emph{complementary Young function} $\tilde G$ of a Young function $G$ is defined as
$$
\tilde G(t):=\sup\{tw -G(w): w>0\}.
$$
The functions $G$ and $\tilde{G}$
are complementary each other and satisfy the inequality below
\begin{equation} \label{ineb}
\tilde{G}\left(g(t)\right)\leq G(2t),\qquad \text{for all } t > 0.
\end{equation}
Moreover, we also have a Young type inequality given by
\begin{equation} \label{Young}
ab\leq G(a)+\tilde G(b)\qquad \text{for all }a,b\geq 0,
\end{equation}
and the following H\"older's type inequality
$$
\int_\Omega |uv|\,dx \leq \|u\|_{L^G(\Omega)} \|v\|_{L^{\tilde G}(\Omega)}
$$
for all $u\in L^G(\Omega)$ and $v\in L^{\tilde G}(\Omega)$. Moreover, it is not hard to see that $\tilde G$ can be written as
\begin{equation} \label{xxxx}
\tilde G(t)=\int_0^t \tilde{g}(\tau)\,d\tau,
\end{equation}
where $\tilde{g}(t)=\sup\{s:\ g(s)\leq t\}$. If $g$ is continuous then $\tilde{g}$ is the inverse of $g$.

\subsection{Fractional Orlicz-Sobolev spaces}

Given a Young function $G$ such that $G'=g$, a parameter $s\in(0,1)$ and an open set $\Omega\subseteq \R^N$. We consider the spaces
\begin{align*}
&L^G(\Omega) :=\left\{ u\colon \Omega \to \R \text{ measurable  such that }  \Phi_{G,\Omega}\left(\lambda u\right) < \infty \text{ for some }\lambda>0 \right\},\\
&W^{s,G}(\Omega):=\left\{ u\in L^G(\Omega) \text{ such that } \Phi_{s,G,\Omega}\left(\lambda u\right) < \infty \text{ for some }\lambda>0 \right\},
\end{align*}

where the modulars $\Phi_{G,\Omega}$ and $\Phi_{s,G,\R^N}$ are defined as
\begin{align*}
&\Phi_{G,\Omega}(u):=\int_{\Omega} G(|u(x)|)\,dx\\
&\Phi_{s,G,\Omega}(u):=
  \iint_{\Omega\times\Omega} G( |D_su(x,y)|)  \,d\mu,
\end{align*}
where  the \emph{$s-$H\"older quotient} is defined as
$$
D_s u(x,y):=\frac{u(x)-u(y)}{|x-y|^s},
$$
being $d\mu(x,y):=\frac{ dx\,dy}{|x-y|^N}$.
These spaces are endowed with the so-called \emph{Luxemburg norms}
\begin{align*}
&\|u\|_{L^G(\Omega)} := \inf\left\{\lambda>0\colon \Phi_{G,\Omega}\left(\frac{u}{\lambda}\right)\le 1\right\},\\
&\|u\|_{W^{s,G}(\Omega)} := \|u\|_{L^G(\Omega)} + [u]_{W^{s,G}(\Omega)},
\end{align*}
where the  {\em $(s,G)$-Gagliardo semi-norm} is defined as
\begin{align*}
&[u]_{W^{s,G}(\Omega)} :=\inf\left\{\lambda>0\colon \Phi_{s,G,\Omega}\left(\frac{u}{\lambda}\right)\leq 1\right\}.
\end{align*}
Under the assumption \eqref{G1}, the space $W^{s,G}(\Omega)$ is a reflexive Banach space. Moreover $C_c^\infty(\R^N)$ is dense in $W^{s,G}(\R^N)$. Also, the spaces $L^G(\Omega)$ and $W^{s,G}(\Omega)$ coincide with the set of measurable functions $u$ on $\Omega$ such that $\Phi_{G,\Omega}\left( u\right) < \infty$ and the set of functions in $L^G(\Omega)$  such that $ \Phi_{s,G,\Omega}\left( u\right) < \infty$ respectively. See \cite[Proposition 2.11]{FBS} and \cite[Theorem 4.7.3]{FS} for details.


The space of fractional Orlicz-Sobolev functions is the appropriated one to define the \emph{fractional $g-$Laplacian operator}
$$
(-\Delta_g)^s u :=2 \,\text{p.v.} \int_{\R^N} g( |D_s u|) \frac{D_s u}{|D_s u|} \frac{dy}{|x-y|^{N+s}},
$$
where \text{p.v.} stands for {\em in principal value}. This operator  is well defined between $W^{s,G}(\R^N)$ and its dual space $W^{-s,\tilde G}(\R^N)$ (see \cite{FBS} for details). In fact,
$$
\langle (-\Delta_g)^s u,v \rangle =   \iint_{\R^N\times\R^N} g(|D_s u|) \frac{D_s u}{|D_s u|}  D_s v \,d\mu,\quad\text{for all }v\in W^{s,G}(\R^N).
$$

In order to state some embedding results for fractional Orlicz-Sobolev spaces we recall that given two Young functions $A$ and $B$, we say that \emph{$B$ is essentially stronger than $A$} or equivalently that \emph{$A$ decreases essentially more rapidly than $B$}, and denoted by $A\prec \prec B$, if for each $a>0$ there exists $x_a\geq 0$ such that $A(x)\leq B(ax)$ for $x\geq x_a$. This is the case if and only if for
every positive constant $\lambda$,
\begin{equation} \label{do2}
 \lim\limits_{\substack{  t\to \infty}} \frac{A(\lambda t)}{B( t)}=0.
 \end{equation}
Suppose that
\begin{equation}
\int_0^1 \frac{G^{-1}(\tau)}{\tau^{(N+s)/N}} d\tau <\infty\quad\text{and}\quad
\int_1^{+\infty} \frac{G^{-1}(\tau)}{\tau^{(N+s)/N}} d\tau =\infty,
\end{equation}
we consider the function $G_*$, the Sobolev conjugate
function of $G$, defined by
\begin{equation}
G_*^{-1}(t)=\int_0^t \frac{G^{-1}(\tau)}{\tau^{(N+s)/N}} d\tau,\qquad \text{for }t>0.
\end{equation}

With these preliminaries the following compact and continuous embeddings hold.

\begin{prop}\label{ceb}$($see \cite{Sabri2}$)$
	Let $G$ be a Young function satisfying \eqref{G1} and let $s\in(0,1)$.
	\begin{itemize}
  \item[(i)]If $\Omega$
 is an open bounded set in $\mathbb{R}^{N}$, then the embedding
 $$W^{s,G}(\Omega)\hookrightarrow L^{G}(\Omega)$$ is compact. 
\item[(ii)] If $B$ is a Young function satisfies the $\Delta_2$-condition such that $B \prec \prec G_{*}$, then the embedding $$W^{s,G}(\R^N)\hookrightarrow L^{B}(\R^N)$$ is continuous. In particular, we have the continuous embeddings
$$W^{s,G}(\R^N)\hookrightarrow L^{G}(\R^N)$$
and $$W^{s,G}(\R^N)\hookrightarrow L^{G_{*}}(\R^N).$$
\end{itemize}
\end{prop}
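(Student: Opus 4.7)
Both claims rest on combining a Sobolev--type modular inequality with standard machinery for Young functions in $\Delta_{2}$, so I will treat the two parts in turn.

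For part (i), the plan is to apply a Fr\'echet--Kolmogorov compactness criterion in the Orlicz space $L^{G}(\Omega)$. Given a bounded sequence $(u_{n})\subset W^{s,G}(\Omega)$, the uniform $L^{G}(\Omega)$-bound is immediate from $\|u\|_{L^{G}(\Omega)}\le\|u\|_{W^{s,G}(\Omega)}$, and equi-integrability follows at once because $|\Omega|<\infty$ and $G\in\Delta_{2}$ thanks to \eqref{G1}. The substantial step is the uniform translation estimate
\begin{equation*}
\sup_{n}\int_{\Omega} G\bigl(|u_{n}(x+h)-u_{n}(x)|\bigr)\,dx \longrightarrow 0,\qquad |h|\to 0.
\end{equation*}
To derive it, I would rewrite $|u(x+h)-u(x)| = |h|^{s}\,|D_{s}u(x,x+h)|$, extract the factor $|h|^{s}$ and then use Lemma \ref{lema.prop} (in particular \eqref{L1}) to produce a factor $|h|^{sp^{-}}$; Jensen's inequality applied to the convex $G$ together with Fubini on $\Omega\times\Omega$ then bounds the modular by $C\,|h|^{sp^{-}}\,\Phi_{s,G,\Omega}(u_{n})$. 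Since $\Phi_{s,G,\Omega}(u_{n})$ is uniformly bounded, the modulus of continuity tends to zero.

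For part (ii), the strategy is to interpolate between two endpoint embeddings: $W^{s,G}(\R^{N})\hookrightarrow L^{G}(\R^{N})$, which is trivial from the norm, and $W^{s,G}(\R^{N})\hookrightarrow L^{G_{*}}(\R^{N})$, the fractional Orlicz--Sobolev inequality obtained via an Adams--type representation based on the very integral formula defining $G_{*}^{-1}$. Given $B\prec\prec G_{*}$ satisfying $\Delta_{2}$, fix $\varepsilon>0$ and choose $t_{\varepsilon}>0$ so that $B(t)\le\varepsilon\,G_{*}(t)$ for $t\ge t_{\varepsilon}$; for $t\le t_{\varepsilon}$, convexity of $B$ together with its $\Delta_{2}$-property yields a comparison $B(t)\le C_{\varepsilon}\,G(t)$ on that range. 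Splitting
\begin{equation*}
\int_{\R^{N}} B(|u|)\,dx \le C_{\varepsilon}\int_{\{|u|\le t_{\varepsilon}\}} G(|u|)\,dx + \varepsilon \int_{\{|u|>t_{\varepsilon}\}} G_{*}(|u|)\,dx
\end{equation*}
and applying the two endpoint embeddings produces a modular bound, which via Lemma \ref{lema.prop} transcribes into the norm inequality $\|u\|_{L^{B}(\R^{N})}\le C\,\|u\|_{W^{s,G}(\R^{N})}$.

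The main technical obstacle is the translation estimate in (i): the standard $L^{p}$ argument using H\"older along segments does not transfer verbatim to Orlicz modulars and requires a careful combination of convexity of $G$, the power bounds from \eqref{G1}, and doubling to keep constants uniform. A secondary delicate point is the near-zero behavior of $B$ in (ii), since $\prec\prec$ is a purely asymptotic condition at infinity; the $\Delta_{2}$-hypothesis on $B$ is precisely what makes the splitting argument robust near $t=0$, and one must check that the $\varepsilon$-dependence of $C_{\varepsilon}$ does not spoil the resulting continuous embedding.
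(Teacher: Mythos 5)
This proposition is not proved in the paper at all; it is imported verbatim from \cite{Sabri2}, so your sketch can only be measured against the standard arguments there. Against that benchmark, part (ii) contains a genuine gap at the near-zero range. The relation $B\prec\prec G_{*}$ controls $B$ only for large arguments, and your claim that convexity of $B$ plus its $\Delta_{2}$-property yields $B(t)\le C_{\varepsilon}G(t)$ for $t\le t_{\varepsilon}$ is false: convexity gives only $B(t)\le (t/t_{\varepsilon})B(t_{\varepsilon})$, i.e. a linear bound, while $G(t)$ may vanish like $t^{p^{+}}$ with $p^{+}>1$, so $B(t)/G(t)$ can blow up as $t\to 0$. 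The failure is not merely technical: for $G(t)=t^{p}$ and $B(t)=t^{q}$ with $1<q<p$ one has $B\prec\prec G_{*}$ and $B\in\Delta_{2}$, yet $W^{s,p}(\R^{N})\hookrightarrow L^{q}(\R^{N})$ fails by scaling $u_{\lambda}(x)=u(x/\lambda)$. The splitting argument therefore needs (and the statement implicitly presupposes) an additional hypothesis of the type $\limsup_{t\to 0}B(t)/G(t)<\infty$, which is what actually closes the integral over $\{|u|\le t_{\varepsilon}\}$ against the modular of $G$; $\Delta_{2}$ alone does not.

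In part (i) your overall strategy (Fr\'echet--Kolmogorov in $L^{G}(\Omega)$) is reasonable, but the translation estimate as described does not go through: after extracting $|h|^{sp^{-}}$ via \eqref{L1} you are left with the slice $\int_{\Omega}G(|D_{s}u_{n}(x,x+h)|)\,dx$, which is \emph{not} dominated by the double-integral modular $\Phi_{s,G,\Omega}(u_{n})$ --- a single direction $y=x+h$ carries no weight in $d\mu$. The correct route is to insert an average over intermediate points $z\in B_{|h|}(x)$, apply Jensen to $G$ of that average, and only then use Fubini to recover $\Phi_{s,G,\Omega}(u_{n})$ with a factor tending to $0$ with $|h|$; you allude to Jensen and Fubini but the order of operations you describe would apply \eqref{L1} to an uncontrolled quantity. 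You also need to say how $u_{n}$ is defined on translates of $\Omega$ (extension, or work on compact subsets plus a uniform smallness estimate near $\partial\Omega$) before the Kolmogorov criterion is applicable. These points are fixable, but as written the two central estimates of the proposal are not yet proofs.
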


A relation between modulars and norms holds. See \cite[Lemma 2.1]{Fukagai} and \cite[Lemma 3.1]{Sabri1}.
\begin{lema}\label{ineq1}
   Let $G$ be a Young function satisfying \eqref{G1} and let $\xi^-(t)=\min\{t^{p^-},t^{p^+}\}$, $\xi^+(t)=\max\{t^{p^-},t^{p^+}\}$, for all $t\geq0$.
    Then
    \begin{itemize}
      \item[(i)] $\xi^-(\|u\|_{L^G(\R^N)})\leq\Phi_{G,\R^N}(u)\leq\xi^+(\|u\|_{L^G(\R^N)})\ \text{for}\ u\in L^{G}(\R^N)$,
      \item[(ii)] $\xi^-([u]_{W^{s,G}(\R^N)})\leq \Phi_{s,G,\R^N}(u) \leq\xi^+([u]_{W^{s,G}(\R^N)})\ \text{for}\ u\in W^{s,G}(\R^N)$.
    \end{itemize}
 \end{lema}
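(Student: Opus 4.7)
The plan is to deduce both (i) and (ii) from the single homogeneity-type estimate
$$\min\{a^{p^-},a^{p^+}\}\,G(b)\le G(ab)\le \max\{a^{p^-},a^{p^+}\}\,G(b),$$
which is property (\ref{L1}) in Lemma \ref{lema.prop}, applied with $a$ equal to the Luxemburg (resp.\ Gagliardo) norm of $u$ and $b$ equal to $|u(x)|/\|u\|$ (resp.\ $|D_s u(x,y)|/[u]$). So the two items are really the same computation, carried out once for $\Phi_{G,\R^N}$ and once for $\Phi_{s,G,\R^N}$.

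For (i), write $\lambda:=\|u\|_{L^G(\R^N)}$, assuming $u\neq 0$ (the case $u=0$ is trivial). A first preliminary step is to observe that condition \eqref{G1} forces both $G$ and $\widetilde G$ to satisfy the $\Delta_2$-condition, so the modular $\Phi_{G,\R^N}$ is continuous on $L^G(\R^N)$; consequently
$$\Phi_{G,\R^N}\!\left(\frac{u}{\lambda}\right)=1.$$
Now apply \eqref{L1} pointwise with $a=\lambda$ and $b=|u(x)|/\lambda$: for every $x$,
$$\min\{\lambda^{p^-},\lambda^{p^+}\}\,G\!\left(\tfrac{|u(x)|}{\lambda}\right)\le G(|u(x)|)\le \max\{\lambda^{p^-},\lambda^{p^+}\}\,G\!\left(\tfrac{|u(x)|}{\lambda}\right).$$
Integrating over $\R^N$ and using $\Phi_{G,\R^N}(u/\lambda)=1$ gives exactly
$$\xi^-(\lambda)\le \Phi_{G,\R^N}(u)\le \xi^+(\lambda),$$
which is (i).

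For (ii), the argument is identical after replacing the pointwise variable $x$ by the pair $(x,y)$, the measure $dx$ by $d\mu=|x-y|^{-N}\,dx\,dy$, the quantity $|u(x)|$ by $|D_s u(x,y)|$, and $\|u\|_{L^G(\R^N)}$ by $[u]_{W^{s,G}(\R^N)}$. The only thing to check separately is that the infimum in the definition of $[u]_{W^{s,G}(\R^N)}$ is attained, i.e.\ that $\Phi_{s,G,\R^N}(u/[u])=1$; again this is a consequence of the $\Delta_2$-property of $G$ guaranteed by \eqref{G1}, which makes $\Phi_{s,G,\R^N}$ continuous as a function of the scaling parameter.

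The only genuinely delicate point is therefore the attainment statement $\Phi_{G,\R^N}(u/\|u\|)=1$ (and its Gagliardo analogue); everything else is bookkeeping on the two cases $\lambda\ge 1$ (so that $\max=\lambda^{p^+}$ and $\min=\lambda^{p^-}$) and $\lambda<1$ (so that $\max=\lambda^{p^-}$ and $\min=\lambda^{p^+}$), which in both regimes collapse to the stated bounds with $\xi^{\pm}$. Since this $\Delta_2$-attainment fact is standard in the Orlicz/fractional Orlicz literature (e.g.\ \cite{FBS}), I would simply cite it rather than reproving it.
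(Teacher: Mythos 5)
Your proof is correct and is exactly the standard argument behind this lemma: the paper itself gives no proof, only the citations to \cite{Fukagai} and \cite{Sabri1}, and those references argue precisely as you do, combining the scaling inequality \eqref{L1} with the attainment identity $\Phi_{G,\R^N}\bigl(u/\|u\|_{L^G(\R^N)}\bigr)=1$ (valid here because $p^+<\infty$ already gives the $\Delta_2$-condition for $G$; the $\Delta_2$-condition for $\widetilde G$ plays no role). The one step you defer, the attainment, is most cleanly justified not by ``continuity of the modular on $L^G(\R^N)$'' but by noting that \eqref{L1} yields $\Phi_{G,\R^N}(u/\lambda)\le(\|u\|_{L^G(\R^N)}/\lambda)^{p^+}\,\Phi_{G,\R^N}\bigl(u/\|u\|_{L^G(\R^N)}\bigr)$ for $0<\lambda<\|u\|_{L^G(\R^N)}$, which would contradict the definition of the Luxemburg norm as an infimum if the modular at $u/\|u\|_{L^G(\R^N)}$ were strictly less than $1$ (the inequality $\le 1$ being automatic by monotone convergence).
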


\begin{lema}\label{smallemb}\cite[Lemma 3.3]{Sabri2}
  Let $\Omega$ be a bounded subset of $\mathbb{R}^{N}$ with $C^{0,1}$-regularity and bounded boundary. Let $G$ be a Young function satisfying condition \eqref{G1}. Then, given $0<s'<s<1$, the embedding
  $$
    W^{s,G}(\Omega)\hookrightarrow W^{s',1}(\Omega),
  $$
  is continuous.
\end{lema}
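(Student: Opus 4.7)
The plan is to split the norm $\|u\|_{W^{s',1}(\Omega)}=\|u\|_{L^{1}(\Omega)}+[u]_{W^{s',1}(\Omega)}$ and control each piece by $\|u\|_{W^{s,G}(\Omega)}$. For the $L^{1}$ part, since $|\Omega|<\infty$ the constant function $1$ lies in $L^{\tilde G}(\Omega)$, and the Orlicz--Hölder inequality recalled in Section \ref{sec.prel} yields
$$
\|u\|_{L^{1}(\Omega)}\le \|u\|_{L^{G}(\Omega)}\,\|1\|_{L^{\tilde G}(\Omega)}=C_{1}(\Omega)\,\|u\|_{L^{G}(\Omega)}.
$$

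For the seminorm piece, the identity $|u(x)-u(y)|=|D_{s}u(x,y)|\,|x-y|^{s}$ together with $s-s'>0$ recasts the target seminorm as
$$
[u]_{W^{s',1}(\Omega)}=\iint_{\Omega\times\Omega} |D_{s}u(x,y)|\,|x-y|^{s-s'}\,d\mu(x,y).
$$
Applying Hölder's inequality in the Orlicz duality $L^{G}(d\mu)$--$L^{\tilde G}(d\mu)$ on the product space $(\Omega\times\Omega,d\mu)$ then gives
$$
[u]_{W^{s',1}(\Omega)}\le C\,[u]_{W^{s,G}(\Omega)}\,\bigl\||x-y|^{s-s'}\bigr\|_{L^{\tilde G}(\Omega\times\Omega,d\mu)}.
$$
It only remains to prove that the weight $|x-y|^{s-s'}$ belongs to $L^{\tilde G}(\Omega\times\Omega,d\mu)$.

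This finiteness is the main obstacle. For $\lambda>0$, Fubini, a translation $z=x-y$ (noting that $\Omega-y\subset B_{R}(0)$ with $R=\operatorname{diam}(\Omega)$), polar coordinates, and the substitution $t=r^{s-s'}$ reduce the relevant modular to
$$
\iint_{\Omega\times\Omega}\tilde G\!\left(\lambda^{-1}|x-y|^{s-s'}\right)d\mu\le \frac{|\Omega|\,|S^{N-1}|}{s-s'}\int_{0}^{R^{s-s'}}\tilde G(\lambda^{-1}t)\,\frac{dt}{t}.
$$
The integrability of $\tilde G(t)/t$ at $t=0$ is obtained by dualising \eqref{G1}: the bound $tg(t)/G(t)\le p^{+}$ passes to $t\tilde g(t)/\tilde G(t)\ge q^{-}:=p^{+}/(p^{+}-1)>1$, and integrating this differential inequality yields $\tilde G(t)\le \tilde G(1)\,t^{q^{-}}$ for $0\le t\le 1$. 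Hence $\tilde G(\lambda^{-1}t)/t$ is bounded above by a constant times $t^{q^{-}-1}$ near the origin, which is integrable since $q^{-}>1$. Choosing $\lambda$ large enough forces the modular below $1$, so the weight norm is finite.

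Combining the two estimates yields $\|u\|_{W^{s',1}(\Omega)}\le C(\Omega,s,s',G)\,\|u\|_{W^{s,G}(\Omega)}$, which is the claimed continuous embedding. The strict lower bound $p^{-}>1$ in \eqref{G1} enters crucially through the complementary Young function: it guarantees a nontrivial lower growth index $q^{-}>1$ for $\tilde G$, without which the weight $|x-y|^{s-s'}$ would fail to be Orlicz-integrable against $d\mu$ and the whole argument would break down. The $C^{0,1}$-regularity of $\partial\Omega$ plays no role in the estimate itself and seems only to fix a standard functional-analytic framework.
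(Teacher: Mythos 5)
Your proof is correct and is essentially the standard argument for this embedding (the paper itself gives no proof, deferring to \cite[Lemma 3.3]{Sabri2}, whose proof follows the same line): rewrite the $W^{s',1}$ seminorm as $\iint |D_s u|\,|x-y|^{s-s'}\,d\mu$, apply the Orlicz--H\"older inequality on $(\Omega\times\Omega,d\mu)$, and verify that the weight $|x-y|^{s-s'}$ lies in $L^{\tilde G}(\Omega\times\Omega,d\mu)$ via the reduction to $\int_0^{R^{s-s'}}\tilde G(\lambda^{-1}t)\,t^{-1}\,dt$; the $L^1$ part via $\|1\|_{L^{\tilde G}(\Omega)}<\infty$ is also fine. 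One remark in your closing paragraph is misattributed: the lower growth index $q^-=p^+/(p^+-1)>1$ of $\tilde G$ is dual to the \emph{upper} bound $p^+<\infty$ in \eqref{G1}, not to $p^->1$; in fact the integrability of $\tilde G(\lambda^{-1}t)/t$ near $t=0$ already follows for any Young function from $\tilde G(t)\le t\,\tilde g(t)$ with $\tilde g$ nondecreasing and finite, so neither index is truly ``crucial'' for that step. This does not affect the validity of the argument.
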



\section{Proof of the Strauss Radial embedding}

\begin{proof}[Proof of Theorem \ref{Strauss-embedding}]
  Let $(u_{n})\subset W^{s,G}_{rad}(\mathbb{R}^{N})$ be a bounded sequence. Since $W^{s,G}_{rad}(\mathbb{R}^{N})$ is a reflexive space, up to subsequence, still denoted by $u_n$,  $$u_{n}\rightharpoonup0\ \text{in}\ W^{s,G}_{rad}(\mathbb{R}^{N}).$$
  Using the continuous embedding $W^{s,G}(\mathbb{R}^{N})\hookrightarrow L^{G}(\mathbb{R}^{N})$, there is $C>0$, such that
  \begin{equation}\label{Mun}
    \int_{\mathbb{R}^{N}}G(u_{n})dx<C.
  \end{equation}
  As $u_{n}$ is  radially symmetric, for all $n\in\mathbb{N}$, then
  \begin{equation}\label{y1y2}
    \int_{B_{r}(y_{1})}G(u_{n})dx=\int_{B_{r}(y_{2})}G(u_{n})dx,\quad \text{for all } y_{1},y_{2}\in\mathbb{R}^{N},\ |y_{1}|=|y_{2}|.
  \end{equation}
   Let us fix $r>0$. In the sequel, for each $y\in\mathbb{R}^N$, $|y|>r$ we denote by $\gamma(y)$ the maximum of those integers $j\geq1$ such that there exist $y_1,y_2,\ldots,y_j\in\mathbb{R}^N$, with $|y_1|=|y_2|=\ldots=|y_j|=|y|$ and $$B_r(y_i)\cap B_r(y_k)=\emptyset,$$ whenever $i\neq k$. It is easy to see that
  \begin{equation}\label{NR}
    \gamma(y)\rightarrow \infty,\ \text{as}\ |y|\rightarrow\infty.
  \end{equation}
Let $y\in\mathbb{R}^N$, $|y|>r$, we choose $y_1,...,y_{\gamma(y)}\in\mathbb{R}^N$ as above,
  then by \eqref{Mun}, \eqref{y1y2} and \eqref{NR}, we obtain
  \begin{align*}
   C& \geq \int_{\mathbb{R}^N}G(u_n(x))dx\geq \sum_{i=1}^{\gamma(y)}\int_{B_r(y_i)}G(u_n(x))dx\\
   &\geq \gamma(y) \int_{B_r(y)}G(u_n(x))dx.
  \end{align*}
  Therefore $$\int_{B_r(y)}G(u_n(x))dx\leq\frac{C}{\gamma(y)}\rightarrow0,\ \ \text{as}\ \ |y|\rightarrow+\infty.$$

 So that, for arbitrary $\epsilon>0$ there exists $R_{\epsilon}>0$ such that
  \begin{equation}\label{R3}
    \sup_{|y|\geq R_{\epsilon}}\int_{B_{r}(y)}G(u_{n})dx\leq \epsilon,\ n\in \mathbb{N}.
  \end{equation}
By Proposition \ref{ceb}, the embedding $$W^{s,G}(B_{r+R_{\epsilon}}(0))\hookrightarrow L^{G}(B_{r+R_{\epsilon}}(0))$$ is compact and thus $u_{n}\rightarrow0$ in $L^{G}(B_{r+R_{\epsilon}}(0))$, which implies

    $$\int_{B_{r+R_{\epsilon}}(0)}G(u_{n})dx\rightarrow0,\ \text{as}\ n\rightarrow\infty,$$
  and so,
  \begin{equation}\label{R3r}
   \sup_{|y|< R_{\epsilon}}\int_{B_{r}(y)}G(u_{n})dx\rightarrow0,\ \text{as}\ n\rightarrow\infty.
  \end{equation}
  Combining \eqref{R3} and \eqref{R3r} and by applying Theorem \ref{Lions1}, we deduce  that $u_{n}\rightarrow0$ in $L^{\Psi}(\mathbb{R}^{N})$. The proof is completed.
\end{proof}
\section{Proofs of Lions and Lieb type results}

\begin{proof}[Proof of Theorem \ref{Lions2}]
Since
$$\lim\limits_{\substack{ t\to 0}}\frac{\Psi\left(\vert t\vert\right)}{G\left(\vert t\vert\right)}=0,$$
given $\epsilon > 0$, there exists $\tau > 0$ such that
\begin{equation}\label{p1}
\Psi(\vert t\vert)\leq \frac{\epsilon}{3M_1}G(\vert t\vert),\qquad  t\in [-\tau,\tau],
\end{equation}
where
\begin{equation*}
M_1=\sup_{n}\displaystyle\int_{\mathbb{R}^{N}} G(\vert u_n\vert) dx.
\end{equation*}
Moreover, as
$$\lim\limits_{\substack{ t\to \infty}}\frac{\Psi\left(\vert t\vert\right)}{G_{*}\left(\vert t\vert\right)}=0$$
 there exists $T> 0$ such that
\begin{equation}\label{D2}
\Psi(\vert t\vert)\leq \frac{\epsilon}{3M_2}G_{*}\left(\vert t\vert\right),\qquad \vert t\vert >T,
\end{equation}
where
\begin{equation*}
M_2=\sup_{n}\displaystyle\int_{\mathbb{R}^{N}}G_{*}(\vert u_n\vert) dx.
\end{equation*}
Therefore,
\begin{align*}
\int_{\mathbb{R}^{N}} \Psi\left(\vert u_n\vert\right)dx &\leq \left( \int_{\left[\vert u_n\vert\leq \tau\right]}+\int_{\left[\tau<\vert u_n\vert\leq T\right]} +\int_{\left[\vert u_n\vert>T\right]} \right) \Psi\left(\vert u_n\vert\right)dx\\
&\leq \frac{2\epsilon}{3}+ \Psi(T)\operatorname{mes}\left(\left[\left|u_{n}\right|>\tau\right]\right).
\end{align*}
Using the fact that $\epsilon$ is arbitrary and that $\operatorname{mes}\left(\left[\left|u_{n}\right|>\tau\right]\right)\rightarrow 0$ as $n\to+\infty$, the desired result holds.
\end{proof}
For the proof of the Theorem \ref{Lieb} (Lieb type result), we need to establish the following lemma that is a key point.
\begin{lema}\label{LemmaLieb}
Let $G \in C^{1}\left([0,+\infty)\right)$ be a Young function  satisfying \eqref{G1p}and $u \subset$ $W^{s, G}\left(\mathbb{R}^{N}\right)\setminus\{0\}$ such that
 $\Phi_{s,G,\mathbb{R}^{N}}(u)^{1/p^{\pm}} \leq M .$ Then, there is $y_0 \in \mathbb{R}^{N}$  depends on $u$ and $C_0 ,C_2> 0$ does not depend on $u$ such that
 \begin{align}
&\left[1+C_2\left( 1+M\left( \int_{\mathbb{R}^{N}} G\left(\vert u/2\vert\right) dx\right)^{-1}\right)\right]^{\frac{N}{s}}\text{mes} \left[  K(y_0)\cap \text{supp}(u)\right]\geq C_0,
\end{align}
where $K(z) =\displaystyle{\prod_{i=1}^N\left(z_i-\frac{1}{2},z_i+\frac{1}{2}\right) }$
for all $z\in \mathbb{R}^{N}$.
\end{lema}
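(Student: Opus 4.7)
My plan is to run a Lieb-style cube decomposition argument. I tile $\mathbb{R}^N$ by the unit cubes $\{K(z)\}_{z\in\mathbb{Z}^N}$, set $A_z:=K(z)\cap\mathrm{supp}(u)$ and $a:=\int_{\mathbb{R}^N} G(|u/2|)\,dx$, so that $a=\sum_{z\in\mathbb{Z}^N}\int_{A_z}G(|u/2|)\,dx$, and take $y_0\in\mathbb{Z}^N$ with $|A_{y_0}|\geq\tfrac{1}{2}\sup_{z\in\mathbb{Z}^N}|A_z|$ (the supremum is at most $1$).

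The heart of the argument is a local small-support estimate on each cube of the form
\[
\int_{A_z} G(|u/2|)\,dx \leq C\,|A_z|^{s/N}\,\Theta_z,
\]
with local quantities $\Theta_z$ satisfying $\sum_z\Theta_z\leq C(a+M)$. To produce this, I would first use the pointwise bound $G(|u/2|)\leq G(1)\bigl(|u/2|^{p^-}+|u/2|^{p^+}\bigr)$, an immediate consequence of \eqref{G1p}, and then apply H\"older's inequality on $A_z\subseteq K(z)$: for $p\in\{p^-,p^+\}$ and a suitable exponent $q$,
\[
\int_{A_z}|u/2|^p\,dx \leq |A_z|^{1-p/q}\,\|u/2\|_{L^q(K(z))}^p.
\]
Choosing $q\geq pN/(N-s)$ makes $1-p/q\geq s/N$, and since $|A_z|\leq 1$ and $p^\pm\geq 1$, the factor $|A_z|^{1-p/q}$ is dominated by $|A_z|^{s/N}$. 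The local $L^q(K(z))$ norms are then controlled by combining Lemma~\ref{smallemb} ($W^{s,G}(K(z))\hookrightarrow W^{s',1}(K(z))$ for any $s'<s$) with the classical fractional Sobolev embedding $W^{s',1}(K(z))\hookrightarrow L^{N/(N-s')}(K(z))$, together with the global embedding $W^{s,G}(\mathbb{R}^N)\hookrightarrow L^{G_*}(\mathbb{R}^N)$ from Proposition~\ref{ceb} whenever a finer exponent is needed.

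Summing the cube-wise estimates over $\mathbb{Z}^N$ and invoking (i) the disjointness of the products $K(z)\times K(z)$, which gives $\sum_z\Phi_{s,G,K(z)}(u)\leq\Phi_{s,G,\mathbb{R}^N}(u)$, (ii) the $\Delta_2$-condition on $G$ (following from \eqref{G1p}), which yields $\Phi_{G,\mathbb{R}^N}(u)\leq 2^{p^+}a$, and (iii) the hypothesis $\Phi_{s,G,\mathbb{R}^N}(u)^{1/p^\pm}\leq M$, I expect to reach
\[
a \leq C\,|A_{y_0}|^{s/N}(a+M),
\]
which rearranges to $|A_{y_0}|^{s/N}\geq [C(1+M/a)]^{-1}$. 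Raising to the power $N/s$ and choosing $C_0,C_2>0$ depending only on $N,s,p^\pm$ and the embedding constants then delivers the claim.

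The main obstacle I anticipate is the local estimate in the Orlicz setting: unlike the pure $L^p$ case, $G_{*}$ is not a clean power of $G$, so the sub-critical ($p^-$) and super-critical ($p^+$) exponents must be handled separately and then patched. The key observation is that $|A_z|\leq 1$ combined with $p^\pm\geq 1$ permits the natural H\"older exponents $sp^\pm/N$ to be uniformly downgraded to the universal $s/N$ required by the statement.
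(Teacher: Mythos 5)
There is a genuine gap --- in fact two, both located in the step you yourself flag as ``the main obstacle'': the local estimate $\int_{A_z}G(|u/2|)\,dx\leq C|A_z|^{s/N}\Theta_z$ with $\sum_z\Theta_z\leq C(a+M)$, where $a:=\int_{\mathbb{R}^N}G(|u/2|)\,dx$. First, the H\"older step for the $p^+$ piece requires $u\in L^{q}(K(z))$ with $q\geq p^+N/(N-s)$. The only local power-integrability available is $W^{s,G}(K(z))\hookrightarrow W^{s',1}(K(z))\hookrightarrow L^{N/(N-s')}(K(z))$ with $s'<s$, which caps the admissible exponent strictly below $N/(N-s)$; since $p^+$ may exceed $1$, the exponent $p^+N/(N-s)$ is out of reach. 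Falling back on $W^{s,G}(\mathbb{R}^N)\hookrightarrow L^{G_*}(\mathbb{R}^N)$ does not repair this: $G_*$ is only guaranteed to grow like $t^{Np^-/(N-sp^-)}$ from below, and $p^+N/(N-s)\leq Np^-/(N-sp^-)$ amounts to $N(p^+-p^-)\leq sp^-(p^+-1)$, which fails in general (take $p^-=1$, $p^+=2$, $s$ small). Second, even granting the local estimate, the summation $\sum_z\Theta_z\leq C(a+M)$ does not go through. After the Sobolev embedding, $\Theta_z$ is controlled by $\bigl(\|u\|_{L^1(K(z))}+[u]_{s',1,K(z)}\bigr)^p$ or, via Lemma~\ref{ineq1}, by powers of the local modular $\Phi_{s,G,K(z)}(u)$ with exponents such as $1/p^+<1$. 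Sums over $z$ of local quantities raised to powers below $1$ are \emph{not} dominated by the corresponding power of $\Phi_{s,G,\mathbb{R}^N}(u)$ (concavity gives the reverse inequality, and such a sum can be made arbitrarily large by spreading $u$ thinly over many cubes), while the quantities that would telescope, namely $\|u\|_{L^1(\mathbb{R}^N)}$ and $[u]_{s',1,\mathbb{R}^N}$, are not controlled by the hypotheses: $W^{s,G}(\mathbb{R}^N)$ does not embed into $W^{s',1}(\mathbb{R}^N)$, and Lemma~\ref{smallemb} is a bounded-domain statement.

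The paper's proof avoids both difficulties by never decomposing $G$ into powers and never summing local norms. It first uses a pigeonhole argument on the tiling to find a \emph{single} cube $K(y_0)$ on which $\Phi_{s,G,K(y_0)}(u)^{1/p^{\pm}}<\left(1+Ma^{-1}\right)\int_{K(y_0)}G(|u/2|)\,dx$, and then applies the fractional Sobolev embedding $W^{s',1}(K(y_0))\hookrightarrow L^{1^*}(K(y_0))$, $1^*=N/(N-s)$, to the \emph{composed} function $G(|u/2|)$, whose seminorm $[G(|u/2|)]_{s',1,K(y_0)}$ is controlled by $\Phi_{s,G,K(y_0)}(u)^{1/p^{\pm}}$ through the local Lipschitz bound on $G$ together with Lemma~\ref{smallemb} and Lemma~\ref{ineq1}. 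H\"older's inequality on $K(y_0)\cap\mathrm{supp}(u)$ then yields $C_3\leq\mathrm{mes}\left[K(y_0)\cap\mathrm{supp}(u)\right]^{s/N}\left[1+C_2\left(1+Ma^{-1}\right)\right]$ after the factor $\int_{K(y_0)}G(|u/2|)\,dx$ cancels from both sides. If you wish to keep a cube decomposition, you must replace the power-function reduction by this composed-function device; as written, the proposal does not close.
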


\begin{proof}
{\bf Claim 1:}
First of all we claim that there is $y_0\in \mathbb{R}^{N}$ such that
\begin{equation}\label{c0}
 \Phi_{s,G,K(y_0)}(u)^{1/p^{\pm}}<\left( 1+M\left( \int_{\mathbb{R}^{N}} G\left(\vert u/2\vert\right) dx\right)^{-1}\right) \left( \int_{K(y_0)} G\left(\vert u/2\vert\right) dx\right).
\end{equation}
Otherwise,
\begin{equation*}
 M\geq \Phi_{s,G,\mathbb{R}^{N}}(u)^{1/p^{\pm}}\geq\left( 1+M\left( \int_{\mathbb{R}^{N}} G\left(\vert u/2\vert\right) dx\right)^{-1}\right)  \left( \int_{\mathbb{R}^{N}} G\left(\vert u/2\vert\right) dx\right) >M
\end{equation*}
the contradiction holds. Thus the Claim 1.

{\bf Claim 2:} Given $0<s^{\prime}<s<1,$ it holds that $G\left(\vert u/2\vert\right) \in W^{s^{\prime},1}\left(K(y_0)\right)$ and $\left[ G\left(\vert u/2\vert\right) \right]_{s^{\prime},1,K(y_0)}\leq C_2\Phi_{s,G,K(y_0)}(u)^{1/p^{\pm}}$ for some $C_2>0.$

Indeed, since $G$ is increasing, then $G\left(\vert u/2\vert\right)\in L^1(K(y_0)).$ On the other hand, there exists $k>0$ such that
\begin{align}\label{Lie1}
\left[ G\left(\vert u/2\vert\right) \right]_{s^{\prime},1,K(y_0)}&=
\int_{K(y_0)}\int_{K(y_0)} \frac{\left\vert G\left(\vert u/2\vert\right)(x)-G\left(\vert u/2\vert\right)(y)\right\vert}{\vert x-y\vert^{N+s^{\prime}}}\, dxdy\nonumber\\
&\leq \frac{k}{2}\int_{K(y_0)}\int_{K(y_0)}\frac{|u(x)-u(y)|}{\vert x-y\vert^{N+s^{\prime}}}\, dxdy=\frac{k}{2}\left[ u\right]_{s^{\prime},1,K(y_0)}.
\end{align}
According to Lemma \ref{smallemb}, there exists $C_1>0$ such that
\begin{equation}\label{Lie2}
  \left[ u\right]_{s^{\prime},1,K(y_0)}\leq C_1 \left[ u\right]_{W^{s,G}(K(y_0))}<\infty.
\end{equation}
The first assertion of the Claim 2 is proved.

Moreover, by \eqref{Lie1}, \eqref{Lie2} and Lemma \ref{ineq1}, we infer that
\begin{align*}
\left[ G\left(\vert u/2\vert\right) \right]_{s^{\prime},1,K(y_0)}&\leq \frac{k}{2}C_1 \left[ u\right]_{W^{s,G}(K(y_0))}\\
&\leq C_2\Phi_{s,G,K(y_0)}(u)^{1/p^{\pm}}.
\end{align*}
Thus the Claim 2 is proved.

Using the continuous  Sobolev embedding
$$
W^{s,1}\left(  K(y_0)\right)\hookrightarrow W^{s^{\prime},1}\left(  K(y_0)\right) \hookrightarrow L^{1^*}\left( K(y_0)\right)
$$
 where $1^* =\frac{N}{N-s}$, there is $C_3 > 0$ such that for all $ w\in W^{s^{\prime},1}\left(  K(y_0)\right)$,
$$
C_3\Vert w\Vert_{ L^{1^*}\left( K(y_0)\right)}\leq  \int_{K(y_0)} \vert w\vert\, dx+\left[w \right]_{s^{\prime},1,K(y_0)}.
$$
Hence for all $ u\in W^{s,G}\left( \mathbb{R}^{N} \right)$
\begin{equation} \label{c3}
 C_3\Vert G\left(\vert u/2\vert\right)\Vert_{ L^{1^*}\left( K(y_0)\right)}\leq \int_{K(y_0)} \vert  G\left(\vert u/2\vert\right)\vert dx+\left[ G\left(\vert u/2\vert\right) \right]_{s^{\prime},1,K(y_0)}.
\end{equation}
Using \eqref{c0} and the Claim 2, we obtain
\begin{align*}
  C_3\Vert G\left(\vert u/2\vert\right)\Vert_{ L^{1^*}\left( K(y_0)\right)}&\leq\int_{K(y_0)} \vert  G\left(\vert u/2\vert\right)\vert dx+C_2\Phi_{s,G,K(y_0)}(u)^{1/\alpha}\\
  &\leq\int_{K(y_0)} \vert  G\left(\vert u/2\vert\right)\vert dx\\
  &+C_2 \left( 1+M\left( \int_{\mathbb{R}^{N}} G\left(\vert u/2\vert\right) dx\right)^{-1}\right) \left( \int_{K(y_0)} G\left(\vert u/2\vert\right) dx\right)\\
  &\leq\int_{K(y_0)} \vert  G\left(\vert u/2\vert\right)\vert dx
  \left[1+C_2\left( 1+M\left( \int_{\mathbb{R}^{N}} G\left(\vert u/2\vert\right) dx\right)^{-1}\right)\right]
\end{align*}
It obvious that for all $v\in L^{1^*}(K(y_0))$,
$$\Vert v\Vert_ {L^{1}(K(y_0))}\leq \text{mes}(K(y_0))^{\frac{1^*-1}{1^*}}\Vert v\Vert_ {L^{1^*}(K(y_0))},\quad\text{with}\quad\frac{1^*-1}{1^*}=\frac{s}{N}.$$
It follows that
\begin{align*}
C_3&\leq\text{mes} \left[  K(y_0)\cap \text{supp}(u)\right]^{\frac{s}{N}}
\left[1+C_2\left( 1+M\left( \int_{\mathbb{R}^{N}} G\left(\vert u/2\vert\right) dx\right)^{-1}\right)\right]
\end{align*}
that is,
\begin{align*}
C_0 &\leq\left[1+C_2\left( 1+M\left( \int_{\mathbb{R}^{N}} G\left(\vert u/2\vert\right) dx\right)^{-1}\right)\right]^{\frac{N}{s}}\text{mes} \left[  K(y_0)\cap \text{supp}(u)\right].
\end{align*}
\end{proof}

\begin{proof}[{\bf Proof of Theorem \ref{Lieb}}]
Noting that
\begin{align*}
 \int_{\mathbb{R}^{N}} G\left( \frac{1}{2}\left(\vert u_n \vert -\frac{\tau}{2}\right)^+ \right)dx
&\geq \int_{\left[ \vert u_n \vert >\tau\right]  } G\left( \frac{1}{2}\left(\vert u_n \vert -\frac{\tau}{2}\right)^+ \right)dx\\
 &\geq G\left(\frac{\tau}{4}\right)\operatorname{mes}\left(\left[\left|u_{n}\right|>\tau\right]\right)\\
 & \geq G\left(\frac{\tau}{4}\right)\delta,
\end{align*}
that is,
\begin{equation*}
\left( \int_{\mathbb{R}^{N}} G\left( \frac{1}{2}\left(\vert u_n \vert -\frac{\tau}{2}\right)^+ \right) dx\right)^{-1}\leq \left(G\left(\frac{\epsilon}{4}\right)\delta\right)^{-1}.
\end{equation*}
 By applying Lemma \ref{LemmaLieb} to the function
$\left(\vert u_n \vert -\frac{\tau}{2}\right)^+$, we find
\begin{align*}
&\left[1+C_2\left( 1+M\left( G\left(\frac{\epsilon}{4}\right)\delta\right)^{-1}\right) \right]^{\frac{N}{s}} \text{mes} \left[  K(y_n)\cap \text{supp}\left(\left(\vert u_n \vert -\frac{\tau}{2}\right)^+\right)\right]\geq C_0.
\end{align*}
As $\text{supp}\left(\left(\vert u_n \vert -\frac{\tau}{2}\right)^+\right)=\left[\vert u_n \vert>\frac{\tau}{2}\right]$, we conclude that
\begin{equation*}
\text{mes} \left[  K(y_n)\cap \left[\vert u_n \vert>\frac{\tau}{2}\right]\right]\geq C_4.
\end{equation*}
 for some $C_4>0$ does not depend on $n$. Let $v_n (x): =u_n (x + y_n)$, we see that
\begin{equation*}
\int_{K(0)}  G\left(\vert v_n\vert\right) dx\geq \int_{ K(y_n)\cap \left[\vert u_n \vert>\frac{\tau}{2}\right]}   G\left(\vert u_n\vert\right) dx \geq G\left( \frac{\tau}{2}\right)C_4=C_5>0,\quad \text{for all } n\in \mathbb{N}.
\end{equation*}
As $(v_n)$ is bounded sequence, by the compact embedding  $W^{s,G}(\mathbb{R}^{N})\hookrightarrow L^G_{loc}(\mathbb{R}^{N})$ (see \cite[Theorem 3.1]{FBS}), there exists $v\in W^{s,G}(\mathbb{R}^{N})$ such that $v_n \rightarrow v$ in $L^G(K(0))$, up to subsequence. Thus,
\begin{equation*}
\int_{K(0)}  G\left(\vert v\vert\right) dx \geq C_5>0,
\end{equation*}
showing that $v\neq 0$, as asserted. Thus the proof.

\end{proof}

\section{Proof of Theorem \ref{m.r.1}}

\begin{defn}\label{weak.solution}
  We say that $u\in W^{s,G}(\R^N)$ is a \emph{weak solution} of \eqref{m.equation} if
$$
\langle (-\Delta_g)^s u,v \rangle +\int_{\R^N} g(|u|)\frac{u}{|u|}v\, dx= \int_{\R^N} f(u)v\,dx
$$
for all $v\in W^{s,G}(\R^N)$.
\end{defn}

As anticipated in the introduction, we will approach problem \eqref{m.equation} through the machinery of variational methods, and in particular, it will be done by using the Mountain Pass Theorem to the energy functional $\mathcal{T}$ defined in \eqref{functional}.

As it is well known, in order to follow this strategy, it is necessary to have some compactness properties on the functional, and so we shall exploit Theorem \ref{Strauss-embedding} by working with the functional $\mathrm{T}$ defined as the restriction of $\mathcal{T}$ to the
space $W^{s,G}_{rad}(\R^N),$ i.e.
$$
\mathrm{T}(u):=(\mathcal{T})|_{W^{s,G}_{rad}(\R^N)}(u),\quad u\in W^{s,G}_{rad}(\R^N).
$$
First of all, let us show that $\mathrm{T}$ satisfies the geometric Mountain Pass structure.

\begin{lema}\label{geo.condition}
$~$
The functional $\mathrm{T}$ satisfies the mountain pass geometry, that is,
\begin{itemize}
  \item [(i)] There exist $\rho>0$ and $\delta_{\rho}>0$ such that $\mathrm{T}(u)\geq \delta_{\rho}$ for any $u\in W^{s,G}_{rad}(\R^N)$ with $\|u\|_{W^{s,G}(\R^N)}=\rho.$
  \item [(ii)] There exists a strictly positive function $e\in W^{s,G}_{rad}(\R^N)$ such that $\|e\|_{W^{s,G}(\R^N)}>\rho$ and $\mathrm{T}(e)< \delta_{\rho}.$
\end{itemize}
\end{lema}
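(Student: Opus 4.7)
The approach is the standard mountain-pass geometry verification adapted to the fractional Orlicz setting, exploiting the subcritical/superlinear split of $f$ encoded in \eqref{f1}--\eqref{f3} together with the modular-norm estimates of Lemma \ref{ineq1}.

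For item (i), I would first combine \eqref{f1} with \eqref{f2} and \eqref{m1} to obtain, for every $\epsilon>0$, a pointwise bound of the form
$$|F(t)| \leq \epsilon\, G(|t|) + C_\epsilon\, M(|t|), \qquad t\in\mathbb{R},$$
split according to whether $|t|$ is small (where \eqref{f1} gives $|f(t)|\leq\epsilon g(|t|)$, hence $|F(t)|\leq\epsilon G(|t|)$) or large (where \eqref{f2} and \eqref{m1} give $|F(t)|\leq C M(|t|)$). Integrating and choosing $\epsilon=\tfrac12$ yields
$$\mathrm{T}(u) \geq \mathcal{J}(u) + \tfrac12\mathcal{I}(u) - C\int_{\mathbb{R}^N} M(|u|)\,dx.$$
By Lemma \ref{ineq1}, for $\|u\|_{W^{s,G}(\mathbb{R}^N)}\leq 1$ one has $\xi^{-}(t)=t^{p^+}$, so $\mathcal{J}(u)\geq [u]_{W^{s,G}}^{p^+}$ and $\mathcal{I}(u)\geq \|u\|_{L^G}^{p^+}$. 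The elementary convexity inequality $a^{p^+}+b^{p^+}\geq 2^{1-p^+}(a+b)^{p^+}$ then gives $\mathcal{J}(u)+\tfrac12\mathcal{I}(u)\geq C_1 \|u\|_{W^{s,G}}^{p^+}$. For the remainder, Proposition \ref{ceb}(ii) applies to $M$ (which satisfies $\Delta_2$ by \eqref{m1} and $M\prec\prec G_*$ since $m^+<p^-_*$), so $\|u\|_{L^M}\leq C\|u\|_{W^{s,G}}$; together with the $M$-analogue of Lemma \ref{ineq1}(i) (valid because $M$ satisfies the same structural hypothesis \eqref{G1}) this yields $\int M(|u|)\,dx\leq C_2\|u\|_{W^{s,G}}^{m^-}$ in the small-norm regime. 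Hence
$$\mathrm{T}(u) \geq C_1\|u\|^{p^+}_{W^{s,G}} - C_3\,\|u\|^{m^-}_{W^{s,G}},$$
and since $m^->p^+$ the right-hand side is strictly positive on a sufficiently small sphere $\|u\|_{W^{s,G}}=\rho$, giving the desired $\delta_\rho>0$.

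For item (ii), integrating the differential inequality $f(t)/F(t)\geq\theta/t$ obtained from \eqref{f3} on $t\geq 1$, and symmetrically on $t\leq -1$, yields the familiar estimate $F(t)\geq c|t|^\theta$ for $|t|\geq 1$. Fix any nontrivial nonnegative radial $\phi\in C_c^\infty(\mathbb{R}^N)$, which lies in $W^{s,G}_{rad}(\mathbb{R}^N)$, and let $t\geq 1$. By Lemma \ref{lema.prop}\eqref{L1}, $\mathcal{J}(t\phi)\leq t^{p^+}\mathcal{J}(\phi)$ and $\mathcal{I}(t\phi)\leq t^{p^+}\mathcal{I}(\phi)$, while
$$\mathcal{F}(t\phi)\geq c\, t^\theta \int_{\{\phi\geq 1/t\}} \phi^\theta\,dx \;\longrightarrow\; c\,t^\theta\int_{\mathbb{R}^N}\phi^\theta\,dx>0$$
as $t\to\infty$ by monotone convergence. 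Since $\theta>p^+$, this forces $\mathrm{T}(t\phi)\to-\infty$, so taking $t$ large enough produces $e:=t\phi\in W^{s,G}_{rad}(\mathbb{R}^N)$ with both $\|e\|_{W^{s,G}}>\rho$ and $\mathrm{T}(e)<\delta_\rho$.

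The only genuinely delicate point is the bookkeeping with Young-function modulars in item (i): one must use the small-norm regime $\xi^{-}(t)=t^{p^+}$ (not $t^{p^-}$) when bounding $\mathcal{J}+\mathcal{I}$ from below, because the exponent $p^+$ is exactly what lets the subcritical remainder $\|u\|^{m^-}$ be dominated through the assumed gap $m^->p^+$. Everything else reduces to modular-norm inequalities already recorded in Section \ref{sec.prel}.
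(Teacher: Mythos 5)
Your proposal is correct and follows essentially the same route as the paper: the same splitting $|F(t)|\leq \epsilon G(|t|)+C_\epsilon M(|t|)$ from \eqref{f1}--\eqref{f2}, the same use of Lemma \ref{ineq1} in the small-norm regime where $\xi^-(t)=t^{p^+}$ and $\xi^+(t)=t^{m^-}$, and the same scaling argument $\mathrm{T}(t\psi)\leq C t^{p^+}-ct^{\theta}\to-\infty$ for item (ii). Your restriction of the bound $F(t)\geq c|t|^\theta$ to $|t|\geq 1$ (handled via monotone convergence) is in fact slightly more careful than the paper's statement of \eqref{pii}, but the argument is otherwise identical.
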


\begin{proof}
 \begin{itemize}
  \item [(i)] From \eqref{f1}-\eqref{f3}, given $\epsilon>0$, there exists $C_\epsilon>0$ such that
  \begin{equation*}
  0\leq F(t)\leq \frac{\epsilon p^+}{\theta}G(\vert t\vert )+C_\epsilon M(\vert t\vert ),\quad\text{for all } t\in \mathbb{R},
  \end{equation*}
 we get
  \begin{equation*}
  \mathrm{T}(u)\geq \iint_{\R^N\times\R^N} G(|D_s u|)\frac{dxdy}{|x-y|^N} +\left(1-\frac{\epsilon p^+}{\theta}\right)\int_{\R^N} G(|u|)dx-C_\epsilon\int_{\R^N} M(|u|)dx.
 \end{equation*}
 Hence for $\epsilon$ small enough and according to Lemma \ref{ineq1}, there exist $C_1,\ C_2>0$ such that
 \begin{equation}\label{e1}
 \mathrm{T}(u)\geq C_1\left( \xi^-([u]_{W^{s,G}(\R^N)})+\xi^-(\|u\|_{L^G(\R^N)})\right)-
 C_2 \xi^+(\|u\|_{L^M(\R^N)})
 \end{equation}
 Choosing $\rho>0$ such that
  \begin{align*}
  \|u\|_{W^{s,G}(\R^N)}=\|u\|_{L^G(\R^N)}+[u]_{W^{s,G}(\R^N)}=\rho<1
  \end{align*}
   and
 \begin{align*}
   \|u\|_{L^M(\R^N)}\leq C\left( \|u\|_{L^G(\R^N)}+[u]_{W^{s,G}(\R^N)}\right)<\rho<1 ,
  \end{align*}
 by \eqref{e1}, we obtain
 \begin{equation*}
 \mathrm{T}(u)\geq C_1\left([u]_{W^{s,G}(\R^N)}^{p^+}+\|u\|_{L^G(\R^N)}^{p^+}\right)-C_2\|u\|_{L^M(\R^N)}^{m^-}
 \end{equation*}
 witch yields
 \begin{equation*}
\mathrm{T}(u)\geq C_3 \|u\|_{W^{s,G}(\R^N)}^{p^+}-C_4\|u\|_{W^{s,G}(\R^N)}^{m^-}
 \end{equation*}
 for some positive constants $C_3$ and $C_4$. Since $0<p^+<m^-$, there exists  $\delta_{\rho}>0$ such that
 $$\mathrm{T}(u)\geq \delta_{\rho} \text{  for all } \|u\|_{W^{s,G}(\R^N)}=\rho.$$
 \item [(ii)]
By \eqref{f3}, there exist $C_5>0 $ such that
  \begin{equation}\label{pii}
 F(t)\geq C_5 \vert t\vert^\theta, \text{ for all }t\in\R.
 \end{equation}
 Let $\psi  \in C_0^\infty(\R^N)$ with $\|\psi\|_{W^{s,G}(\R^N)}>\rho$ and $\displaystyle{\int_{\R^N} \vert \psi\vert^\theta dx>0}$. Using Lemma \ref{ineq1} and \eqref{pii}, we get
\begin{align*}
\mathrm{T}(t\psi)&\leq \xi^+(t)\left( \xi^+([\psi]_{W^{s,G}(\R^N)})+\xi^+(\|\psi\|_{L^G(\R^N)})\right)-
 C_5t^\theta\int_{\R^N} \vert \psi\vert^\theta dx\\
 &\leq t^{p^+}\left( \xi^+([\psi]_{W^{s,G}(\R^N)})+\xi^+(\|\psi\|_{L^G(\R^N)})\right)-
 C_5t^\theta\int_{\R^N} \vert \psi\vert^\theta dx, \quad\text{for } t>1.
 \end{align*}
 Since $p^+<\theta$, it follows that,
$$\mathrm{T}(t\psi)\rightarrow -\infty\text{ as } t\rightarrow+\infty$$
Hence, there exists $t_0 > 0$ such that $\mathrm{T}(e)<  \delta_{\rho}$, where $e = t_0\psi$. Thus the proof
of (ii) is complete.
\end{itemize}

\end{proof}

\begin{lema}\label{cv of PS}
  The functional $\mathrm{T}$ satisfies the Palais–Smale condition at any level $c\in\R$ $((PS)_c$ condition, for short$)$,  that is for any Palais–Smale sequence at level $c$ for  $\mathrm{T}$, i.e. a sequence $(u_n)_{n\in\mathbb{N}}\subset W^{s,G}_{rad}(\R^N)$  satisfying
\begin{equation}\label{cv T}
  \mathrm{T}(u_n)\rightarrow c
\end{equation}
and
\begin{equation}\label{cv T'}
  \sup\{ |\langle\mathrm{T}^{'}u_n,v\rangle|:\ v\in W^{s,G}_{rad}(\R^N),\ \|v\|_{W^{s,G}(\R^N)}\leq 1\} \rightarrow0
\end{equation}
as $n\rightarrow+\infty$ admits a subsequence which is strongly convergent in $W^{s,G}_{rad}(\R^N)$.
\end{lema}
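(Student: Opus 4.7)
The plan is to prove the Palais--Smale condition in four steps: boundedness of the sequence, weak convergence, splitting off the nonlinear term via the radial compactness result, and then closing with an $(S_+)$--type argument for the principal operator.

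First I would establish boundedness of $(u_n)$ in $W^{s,G}(\mathbb{R}^N)$. Computing $\mathrm{T}(u_n)-\tfrac{1}{\theta}\langle \mathrm{T}'(u_n),u_n\rangle$ and exploiting assumption $(f_3)$ together with $(G_1)$ (which gives $tg(t)\le p^+ G(t)$), one finds
\[
c+o(1)+\tfrac{1}{\theta}\|u_n\|_{W^{s,G}(\mathbb{R}^N)}\ \ge\ \Bigl(1-\tfrac{p^+}{\theta}\Bigr)\bigl(\mathcal{J}(u_n)+\mathcal{I}(u_n)\bigr).
\]
Since $\theta>p^+$, this bounds $\mathcal{J}(u_n)+\mathcal{I}(u_n)$. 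Lemma~\ref{ineq1} then upgrades this to a uniform bound on $[u_n]_{W^{s,G}}$ and $\|u_n\|_{L^G}$, hence on $\|u_n\|_{W^{s,G}(\mathbb{R}^N)}$.

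Next I would extract a subsequence. Because $(G_1)$ holds with $p^->1$, the space $W^{s,G}_{rad}(\mathbb{R}^N)$ is reflexive, so up to a subsequence $u_n\rightharpoonup u$ in $W^{s,G}_{rad}(\mathbb{R}^N)$. The hypotheses on $m$ ensure $p^+<m^-\le m^+<p^-_*$, so $M$ is $\Delta_2$ with $\lim_{t\to 0}M(t)/G(t)=0$ (because $m^->p^+$) and $M\prec\prec G_*$ (because $m^+<p^-_*$). Theorem~\ref{Strauss-embedding} therefore yields $u_n\to u$ strongly in $L^M(\mathbb{R}^N)$, and analogously in $L^G(\mathbb{R}^N)$ by choosing $\Psi=G$.

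For the third step I would show $\int_{\mathbb{R}^N}f(u_n)(u_n-u)\,dx\to 0$. Combining $(f_1)$ and $(f_2)$, for every $\varepsilon>0$ there is $C_\varepsilon>0$ such that
\[
|f(t)|\ \le\ \varepsilon\, g(|t|)+C_\varepsilon\, m(|t|)\qquad\text{for all }t\in\mathbb{R}.
\]
Applying the H\"older inequality in Orlicz spaces, using $\tilde G(g(t))\le G(2t)$ from \eqref{ineb} together with $\tilde M(m(t))\le M(2t)$, and invoking the uniform $L^G$--bound on $(u_n)$ plus the strong convergence $u_n\to u$ in $L^M(\mathbb{R}^N)$, the first summand is bounded by $C\varepsilon$ while the second vanishes as $n\to\infty$. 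Letting $\varepsilon\to 0$ gives the claim; the analogous argument handles $\int f(u_n)(u_n-u)$.

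The last and main step is to extract strong convergence from the operator part. Testing $\mathrm{T}'(u_n)$ against $u_n-u$ and using \eqref{cv T'} together with the previous step shows
\[
\mathcal{Q}_n\ :=\ \iint_{\mathbb{R}^N\times\mathbb{R}^N} g(|D_s u_n|)\tfrac{D_s u_n}{|D_s u_n|}\, D_s(u_n-u)\, d\mu+\int_{\mathbb{R}^N} g(|u_n|)\tfrac{u_n}{|u_n|}(u_n-u)\,dx\ \longrightarrow\ 0.
\]
Here is where assumption $(G_2)$ enters: the convexity of $t\mapsto G(\sqrt{t})$ yields a strict monotonicity inequality of Simon type for the vector field $\xi\mapsto g(|\xi|)\xi/|\xi|$, which provides the $(S_+)$--property for the fractional $g$--Laplacian plus the $g(|\cdot|)\cdot/|\cdot|$ perturbation. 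Combining $\mathcal{Q}_n\to 0$ with this monotonicity and the weak convergence $u_n\rightharpoonup u$ forces $\Phi_{s,G,\mathbb{R}^N}(u_n-u)+\Phi_{G,\mathbb{R}^N}(u_n-u)\to 0$, which by Lemma~\ref{ineq1} is equivalent to $\|u_n-u\|_{W^{s,G}(\mathbb{R}^N)}\to 0$.

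The main obstacle is this final step: controlling the nonlocal quadratic-like form $\mathcal{Q}_n$ and converting it into norm convergence requires a Simon-type pointwise inequality for $g(|\xi|)\xi/|\xi|-g(|\eta|)\eta/|\eta|$ acting on $\xi-\eta$ that is strong enough in the Orlicz framework to handle both the $p^-<2$ and $p^->2$ regimes. Condition $(G_2)$ is precisely tailored to give uniform convexity of $L^G$ and make such an inequality available, and it is the crucial hypothesis that makes this argument succeed.
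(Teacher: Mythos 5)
Your proposal is correct and follows essentially the same route as the paper: boundedness via $(f_3)$ and Lemma \ref{ineq1}, weak convergence by reflexivity, strong $L^M$ and $L^G$ convergence from Theorem \ref{Strauss-embedding}, vanishing of $\int f(u_n)(u_n-u)\,dx$, and conclusion by the $(S_+)$ property of $(\mathcal{J}+\mathcal{I})'$. The only difference is that you sketch how \eqref{G2} yields the $(S_+)$ property via a Simon-type monotonicity inequality, whereas the paper simply cites \cite[Lemma 4.9]{Sabri2} for this fact.
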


\begin{proof}
Let $(u_n)_{n\in\mathbb{N}}$ be a Palais–Smale sequence  at level $c$ for $\mathrm{T}$.

{\bf Claim:} $(u_n)_{n\in\mathbb{N}}$ is bounded in $W^{s,G}_{rad}(\R^N).$

Indeed, there exists $C>0$ such that
$$
C(1+\|u_n\|_{W^{s,G}(\R^N)})\geq \mathrm{T}(u_n)-\frac{1}{\theta}\langle\mathrm{T}^{'}u_n,u_n\rangle,\quad\forall n\in\mathbb{N}.
$$
According to \eqref{f3},
\begin{align*}
   C(1+\|u_n\|_{W^{s,G}(\R^N)})&\geq \left(\frac{\theta-p^+}{\theta}\right)\left[\iint_{\R^N\times\R^N} G(|D_s u_n|)\frac{dxdy}{|x-y|^N} +\int_{\R^N} G(|u_n|)dx\right]\\
   &\geq \left(\frac{\theta-p^+}{\theta}\right)\left[\xi^-([u_n]_{W^{s,G}(\R^N)})+\xi^-(\|u_n\|_{L^G(\R^N)})\right].
\end{align*}

Arguing by contradiction, suppose that $\|u_n\|_{W^{s,G}(\R^N)}\to\infty$, up to a subsequence. This yields that the following $(a)$, $(b)$ or $(c)$ occurs:
\begin{itemize}
  \item [(a)] $[u_n]_{W^{s,G}(\R^N)}\to \infty$ and $\|u_n\|_{L^G(\R^N)}\to\infty.$
  \item [(b)] $[u_n]_{W^{s,G}(\R^N)}\to \infty$ and $\|u_n\|_{L^G(\R^N)}$ is bounded.
  \item [(c)] $[u_n]_{W^{s,G}(\R^N)}$ is bounded and $\|u_n\|_{L^G(\R^N)}\to\infty.$
\end{itemize}

In  case $(a)$, according to Lemma \ref{ineq1}, there exists $C_1>0$ such that
$$
C(1+\|u_n\|_{W^{s,G}(\R^N)})\geq\left(\frac{\theta-p^+}{\theta}\right)\left([u_n]_{W^{s,G}(\R^N)}^{p^-}+\|u_n\|_{L^G(\R^N)}^{p^-}\right)\geq C_1 \|u_n\|_{W^{s,G}(\R^N)}^{p^-},
$$
for $n$ large enough, which is a contradiction.

In case $(b)$, also according to Lemma \ref{ineq1}, for $n$ large enough, there exists $C_2>0$ such that
$$
C_3(1+[u_n]_{W^{s,G}(\R^N)}) \geq C(1+\|u_n\|_{W^{s,G}(\R^N)})\geq C_2[u_n]_{W^{s,G}(\R^N)}^{p^-},
$$
which is a contradiction. The case $(c)$ is similar to $(b)$, we conclude that $(u_n)_{n\in\mathbb{N}}$ is bounded in $W^{s,G}_{rad}(\R^N)$.

Since $W^{s,G}_{rad}(\R^N)$ is a reflexive space, up to a subsequence, still denoted by $(u_n)_{n\in\mathbb{N}}$, there exists $u\in W^{s,G}_{rad}(\R^N)$ such that
\begin{equation}\label{cv.wekly}
  u_n\rightharpoonup u\quad \text{weakly in}\quad W^{s,G}_{rad}(\R^N)\quad \text{as}\ n\to\infty.
\end{equation}

By the compact embedding \eqref{Strauss-embedding}, we infer that
\begin{equation}\label{cv.wekly.psi}
  u_n\to u\quad \text{in}\quad L^{M}(\R^N)\ \text{and}\ L^{G}(\R^N)\quad \text{as}\ n\to\infty.
\end{equation}
On one hand, by \eqref{cv T'}, we find that
\begin{align*}
  \lim_{n\to\infty} \langle\mathrm{T}^{'}u_n,u_n-u\rangle=0.
\end{align*}
On the other hand, by \eqref{f1}- \eqref{f2},  \eqref{cv.wekly.psi}, Young inequality, boundedness of $(u_n)_n$ and \cite[Lemma 2.9]{FBS}, we deduce that
\begin{align*}
   \int_{\R^N}f(u_n)(u_n-u)\,dx\to0,\quad\text{as}\ n\to\infty.
\end{align*}
Since
$$
\langle\mathrm{T}^{'}u_n,u_n-u\rangle=\langle\mathcal{J}^{'}u_n,u_n-u\rangle
+\langle\mathcal{I}^{'}u_n,u_n-u\rangle-\int_{\R^N}f(u_n)(u_n-u)\,dx,
$$
then
$$
\langle\mathcal{J}^{'}u_n+\mathcal{I}^{'}u_n,u_n-u\rangle
\to0,\quad\text{as}\quad n\to\infty.
$$
According to \cite[Lemma 4.9]{Sabri2}, $(\mathcal{J}+\mathcal{I})^{'}$ is of type $(S_+)$ and so $(u_n)$ converges strongly to $u$ in $W^{s,G}(\R^N).$
\end{proof}

\begin{proof}[{\bf Conclusion of the proof of Theorem \ref{m.r.1}}]
  Thanks to Lemma \ref{geo.condition} and Lemma \ref{cv of PS}, by applying the Mountain Pass Theorem,
  there exists $u_1 \in W^{s,G}_{rad}(\R^N)$ a critical point of $\mathrm{T}$, that is
  $$
  \langle \mathrm{T}^{'}(u_1),\varphi\rangle=0\quad\text{for all}\quad \varphi \in W^{s,G}_{rad}(\R^N).
  $$

  It remains to prove that $W^{s,G}_{rad}(\R^N)$ is a natural constraint for $\mathcal{T},$ i.e. $u_1$ is a  critical point of $\mathcal{T}$, that is in sense of definition \ref{weak.solution}. To this aim we will use the well known
principle of symmetric criticality  \cite{Palai} that we recall briefly.
{\it

\hspace{0.3 cm}  Let $(E,\|.\|_E)$ be a reflexive Banach space. Suppose that $\textbf{G}$ is a subgroup of isometries $g:E\rightarrow E$.
Consider the $\textbf{G}$-invariant closed subspace of $E$ $$\Sigma=\{u\in E:\ gu=u\ \text{for all}\ g\in \textbf{G}\}.$$

\begin{lema}\cite[Proposition 3.1]{daniel}\label{PCS}
  Let $E$,$\textbf{G}$ and $\Sigma$ be as before and let $J$ be a $C^1$ functional defined on $E$ such that $J\circ g=J$ for all $g\in \textbf{G}$. Then $u\in \Sigma$ is a critical point of $J$ if and only if $u$ is a critical point of $J|_{\Sigma}$.
\end{lema}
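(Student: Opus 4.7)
The plan is to follow Palais' classical averaging scheme. The implication ``$u$ critical for $J$ on $E$ $\Rightarrow$ $u$ critical for $J|_\Sigma$'' is immediate by restricting the test vectors to $\Sigma$; the content lies in the converse, so I would concentrate there. The strategy has two ingredients: show that the differential $J'(u)$ is itself $\textbf{G}$-invariant, then recover an arbitrary test vector in $E$ from one in $\Sigma$ via group averaging.

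First I would exploit the $\textbf{G}$-invariance of $J$ at the level of the differential. Since each $g\in \textbf{G}$ is a linear isometry of $E$, its Fr\'echet derivative at any point is $g$ itself. Differentiating the identity $J\circ g=J$ at a point $u\in\Sigma$ (so that $gu=u$) yields
\[
\langle J'(u),gw\rangle=\langle J'(gu),gw\rangle=\langle J'(u),w\rangle,\qquad \forall\, w\in E,\ \forall\, g\in\textbf{G}.
\]
Thus $J'(u)\in E^{*}$ lies in the fixed-point set of the dual $\textbf{G}$-action. Next, to average test vectors, I would use that in the application relevant here the isometry group is $\textbf{G}=O(N)$, which is compact and carries a normalised Haar measure $d\mu$. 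For $w\in E$, the orbit map $g\mapsto gw$ is continuous, so the Bochner integral
\[
\bar w:=\int_{\textbf{G}}gw\,d\mu(g)
\]
exists in $E$. Left invariance of $\mu$ gives $h\bar w=\bar w$ for every $h\in\textbf{G}$, i.e.\ $\bar w\in\Sigma$; combining this with the equivariance of $J'(u)$ yields
\[
\langle J'(u),w\rangle=\int_{\textbf{G}}\langle J'(u),gw\rangle\,d\mu(g)=\langle J'(u),\bar w\rangle.
\]

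Finally, since $u$ is a critical point of $J|_{\Sigma}$ and $\bar w\in\Sigma$, the right-hand side vanishes; as $w\in E$ is arbitrary, $J'(u)=0$ in $E^{*}$, so $u$ is critical for $J$ on $E$. The only delicate step I expect is the well-posedness of the Bochner average, which requires the strong continuity of the canonical $O(N)$-action on $W^{s,G}(\mathbb{R}^{N})$. This is standard: it follows from the density of $C_c^{\infty}(\mathbb{R}^{N})$ in $W^{s,G}(\mathbb{R}^{N})$ together with dominated convergence applied to the modulars $\Phi_{G,\mathbb{R}^N}$ and $\Phi_{s,G,\mathbb{R}^N}$ recalled in Section~\ref{sec.prel}, so the abstract scheme above applies verbatim in our setting.
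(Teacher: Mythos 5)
Your argument is correct, but note that the paper does not prove this lemma at all: it is imported verbatim as \cite[Proposition 3.1]{daniel}, so there is no in-text proof to compare against. What you have written is Palais' classical proof for \emph{compact} groups: $\textbf{G}$-invariance of $J'(u)$ at a fixed point, Haar averaging $w\mapsto\bar w\in\Sigma$, and the identity $\langle J'(u),w\rangle=\langle J'(u),\bar w\rangle=0$. All the individual steps check out, including the two points you rightly flag as needing care (the chain-rule computation using that the Fr\'echet derivative of a linear isometry is itself, and the strong continuity of the $SO(N)$-action on $W^{s,G}(\mathbb{R}^N)$ needed for the Bochner integral, which indeed follows from density of $C_c^\infty$ and an $\varepsilon/3$ argument exploiting that the action is isometric). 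The one substantive caveat is scope: the lemma as stated is for an \emph{arbitrary} subgroup of isometries of a reflexive Banach space, and your averaging scheme only covers compact $\textbf{G}$ with strongly continuous action; the cited reference instead deduces the general case from reflexivity, showing that a $\textbf{G}$-invariant functional $f=J'(u)$ vanishing on $\Sigma$ must vanish identically (the set where $f$ attains its norm on the closed unit ball is a nonempty closed convex $\textbf{G}$-invariant set meeting $\Sigma$), with no integration over the group. Since you explicitly restrict to the group actually used in the paper (which is $SO(N)$ rather than $O(N)$, a harmless discrepancy), your proof fully suffices for the application, and has the merit of being self-contained; it just proves a special case of the quoted statement rather than the statement itself.
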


}
Finally, we have all the ingredients to complete the proof of Theorem \ref{m.r.1}.

Let $SO(N)$ denote the special orthogonal group, that is
$$
SO(N)=\{A\in \mathcal{M}_{N\times N}(\mathbb{R}):\ \ A^{t}A=I_N\ \text{and}\ det(A)=1\}.
$$
Consider the following subgroup of linear operators of $W^{s,G}(\R^N)$ in itself
$$
\textbf{G}=\bigg{\{}a\colon W^{s,G}(\R^N)\rightarrow W^{s,G}(\R^N):\quad \ au=u\circ A,\ \text{where}\ A\in SO(N)\bigg{\}}.
$$
We have
$$
W^{s,G}_{rad}(\R^N)=\{u\in W^{s,G}(\R^N):\quad gu=u\ \text{for all}\ g\in \textbf{G}\}.
$$

Let's prove that $\textbf{G}$ is a subgroup of isometries of $W^{s,G}(\R^N)$:

\hspace{0.3 cm} Fix $u$ in $W^{s,G}(\R^N)$, let  $A\in SO(N)$ and $x,y\in\R^N$, we have $|x-y|=|A(x-y)|=|Ax-Ay|=|x'-y'|$. Let $a\in \textbf{G}$, we have
\begin{align*}
  \|au\|_{W^{s,G}(\R^N)}&=
   \inf\left\{\lambda>0\colon \int_{\R^N} G\left(\frac{|u(Ax)|}{\lambda}\right)\,dx \leq 1\right\}\\
   &+\inf\left\{\lambda>0\colon \iint_{\R^N\times\R^N} G\left(\frac{ |u(Ax)-u(Ay)|}{\lambda|x-y|^s}\right) \frac{dxdy}{|x-y|^N} \leq 1\right\}\\
  &=\inf\left\{\lambda>0\colon \int_{\R^N} G\left(\frac{|u(x')|}{\lambda}\right)\,dx' \leq 1\right\}\\
   &+\inf\left\{\lambda>0\colon \iint_{\R^N\times\R^N} G\left(\frac{ |u(x')-u(y')|}{\lambda|x'-y'|^s}\right) \frac{dx'dy'}{|x'-y'|^N} \leq 1\right\} \\
&=  \|u\|_{W^{s,G}(\R^N)}.
\end{align*}

In order to apply Lemma \ref{PCS} to the functional $\mathcal{T}$ , we need to show that $\mathcal{T} \circ a = \mathcal{T}$ for all $a \in \textbf{G}$.
Fixed $u\in W^{s,G}(\R^N)$, for all $a \in \textbf{G}$ it comes that
\begin{align*}
  (\mathcal{T}\circ a)(u)&=\iint_{\R^N\times\R^N} G\left(\frac{ |u(Ax)-u(Ay)|}{|x-y|^s}\right) \frac{dxdy}{|x-y|^N}+\int_{\R^N} G\left(|u(Ax)|\right)\,dx-\int_{\R^N} F\left(|u(Ax)|\right)\,dx\\
  &=\iint_{\R^N\times\R^N} G\left(\frac{ |u(x')-u(y')|}{|x'-y'|^s}\right) \frac{dx'dy'}{|x'-y'|^N}+\int_{\R^N} G\left(|u(x')|\right)\,dx'-\int_{\R^N} F\left(|u(x')|\right)\,dx'\\&=\mathcal{T}(u).
\end{align*}

Then, Lemma \ref{PCS} implies that $u$ is a critical point of $\mathcal{T}$ in the whole space $W^{s,G}(\R^N)$. Thus, $u$ is a weak solution
of \eqref{m.equation} in the sense of definition \ref{weak.solution}.
\end{proof}
\section{Proof of Theorem \ref{m.r.2}}
It is well known that if $p^->1$, then $G$ and $\widetilde G$ satisfies the  $\Delta_2$-condition and therefore  the functional $\Phi:=\mathcal{J}+\mathcal{I}\in C^{1}(W^{s,G}(\R^N),\mathbb{R})$( see). But,  if $p^-=1$,  $\tilde{G}$ does not satisfy the $\Delta_2$-condition, and as a consequence, $ W^{s,G}(\R^N)$ is non reflexive anymore.
An immediate consequence, we cannot guarantee that $\mathcal{T}$ belongs to $C^{1}(W^{s,G}(\R^N),\mathbb{R})$. However, the functional $\mathcal{F}: W^{s,G}(\R^N) \to \mathbb{R} $ given by \eqref{func.1}
belongs to $C^{1}( W^{s,G}(\R^N),\mathbb{R}),$(See appendix for details).
Furthermore, we know that $\Phi$ is strictly convex   and  l.s.c. with respect to the weak$^*$ topology.\\
Concerning the lack of regularity of $\Phi$ and according the above commentaries, in the present part of the paper we will use a minimax method developed by Szulkin  \cite{Szulkin}. In this sense, we will say that $u \in W^{s,G}(\R^N)$ is a critical point for $\mathcal{T}$ if $0 \in \partial \mathcal{T}(u)$, where the  sub-differential of the functional $\mathcal{T}$ at  a point $u\in W^{s,G}(\R^N)$ is the following
\begin{align*}
\partial \mathcal{T}(u)=\left\lbrace w\in W^{-s,\tilde{G}}(\R^N):\,\Phi(v)-\Phi(u)-\langle \mathcal{F}'(u), v-u\rangle\geq\langle w,v-u\rangle,\right. \\
 \left. \text{ for all }v \in W^{s,G}(\R^N)  \right\rbrace .
\end{align*}

 Then $u \in W^{s,G}(\R^N)$ is a critical point of $\mathcal{T}$ if, and only if,
\begin{equation} \label{E1}
\Phi(v)-\Phi(u)\geq \int_{\Omega}f(u)(v-u)\,dx, \quad \text{ for all } v \in   W^{s,G}(\R^N).
\end{equation}

\begin{lema}\label{weak.solution.w}
 If $u \in  W^{s,G}(\R^N)$ be a critical point of $\mathcal{T}$ in $ W^{s,G}(\R^N)$ in \eqref{E1} sense, then $u$ is a weak solution for \eqref{m.equation}, that is, \begin{equation}\label{equa.weak}
\langle (-\Delta_g)^s u,v \rangle +\int_{\R^N} g(|u|)\frac{u}{|u|}v\, dx=\int_{\Omega}f(u)v\,dx, \quad \forall v \in  W^{s,G}(\R^N).
\end{equation}
\end{lema}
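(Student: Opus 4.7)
The plan is to exploit the convexity of $\Phi = \mathcal{J}+\mathcal{I}$ together with the $C^{1}$ regularity of $G$ to compute the two-sided directional derivative of $\Phi$ at $u$ and thereby upgrade the variational inequality \eqref{E1} to an equality. Fix $\varphi \in W^{s,G}(\mathbb{R}^{N})$ arbitrary. Since $W^{s,G}(\mathbb{R}^{N})$ is a linear space, the admissible test functions $v = u+t\varphi$ and $v = u-t\varphi$ may be substituted in \eqref{E1} for $t \in (0,1]$, yielding after division by $t$
\begin{equation*}
\frac{\Phi(u+t\varphi)-\Phi(u)}{t} \;\geq\; \int_{\mathbb{R}^{N}} f(u)\varphi\,dx \;\geq\; \frac{\Phi(u)-\Phi(u-t\varphi)}{t}.
\end{equation*}
Letting $t \to 0^{+}$ and recognising the outer quantities as the right and left Gateaux derivatives of $\Phi$ at $u$ along $\varphi$, identifying both one-sided limits with the same value will force equality and deliver \eqref{equa.weak}.

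To evaluate these limits, note that, since $G \in C^{1}$, the chain rule gives almost everywhere
\begin{equation*}
\tfrac{d}{dt}G(|u+t\varphi|)\big|_{t=0} = g(|u|)\tfrac{u}{|u|}\varphi,\qquad \tfrac{d}{dt}G(|D_{s}(u+t\varphi)|)\big|_{t=0} = g(|D_{s}u|)\tfrac{D_{s}u}{|D_{s}u|}D_{s}\varphi,
\end{equation*}
which is precisely the pointwise integrand appearing in \eqref{equa.weak}. To interchange limit and integral by dominated convergence, I control the difference quotients for $|t|\leq 1$ using the mean value theorem and the monotonicity of $g$, getting
\begin{equation*}
\left|\frac{G(|u+t\varphi|)-G(|u|)}{t}\right| \leq g(|u|+|\varphi|)\,|\varphi|,
\end{equation*}
and the analogous bound for the Gagliardo integrand with $D_{s}u,D_{s}\varphi$ in place of $u,\varphi$. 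Assumption \eqref{G1p} forces the $\Delta_{2}$-condition on $G$ itself (with constant $2^{p^{+}}$ via \eqref{L1}), so combining \eqref{ineb} and \eqref{L2} yields
\begin{equation*}
\int_{\mathbb{R}^{N}} \widetilde{G}\bigl(g(|u|+|\varphi|)\bigr)\,dx \leq \int_{\mathbb{R}^{N}} G\bigl(2(|u|+|\varphi|)\bigr)\,dx \leq C\bigl(\Phi_{G,\mathbb{R}^{N}}(u)+\Phi_{G,\mathbb{R}^{N}}(\varphi)\bigr) < \infty,
\end{equation*}
hence $g(|u|+|\varphi|) \in L^{\widetilde{G}}(\mathbb{R}^{N})$ and Hölder's inequality supplies the required $L^{1}(\mathbb{R}^{N})$ dominant; the very same reasoning applies on $\mathbb{R}^{N}\times\mathbb{R}^{N}$ with the measure $d\mu$.

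Assembling the pieces, dominated convergence identifies both outer quantities in the sandwich with the common value
\begin{equation*}
\iint_{\mathbb{R}^{N}\times\mathbb{R}^{N}} g(|D_{s}u|)\tfrac{D_{s}u}{|D_{s}u|}D_{s}\varphi\,d\mu + \int_{\mathbb{R}^{N}} g(|u|)\tfrac{u}{|u|}\varphi\,dx = \langle (-\Delta_{g})^{s}u,\varphi\rangle + \int_{\mathbb{R}^{N}} g(|u|)\tfrac{u}{|u|}\varphi\,dx,
\end{equation*}
forcing equality and hence \eqref{equa.weak}. The main delicacy is precisely this dominated convergence step, since $\widetilde{G}$ may fail the $\Delta_{2}$-condition when $p^{-}=1$; the substitute is that the $\Delta_{2}$-property of $G$ alone, which \emph{does} hold under \eqref{G1p}, combined with the Young-type inequality \eqref{ineb}, is enough to keep the dominants in $L^{\widetilde{G}}(\mathbb{R}^{N})$, which is exactly what makes the argument survive in the non-reflexive setting.
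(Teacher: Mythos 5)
Your proposal is correct and follows essentially the same route as the paper: both establish the (two-sided) G\^ateaux differentiability of $\Phi$ via the mean value theorem, dominate the difference quotients using the inequality $\tilde G(g(t))\le G(2t)$ together with Young's inequality and the $\Delta_2$-property of $G$ (which is what saves the argument when $\tilde G$ fails $\Delta_2$), and then test \eqref{E1} with $v=u\pm t\varphi$ to upgrade the variational inequality to the equality \eqref{equa.weak}. The only cosmetic difference is that the paper first proves differentiability and then exploits the sign change $v\mapsto -v$, whereas you set up the two-sided sandwich from the outset.
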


\begin{proof}
 Since $G$ satisfies the $\Delta_2$-condition, we claim that $\Phi $ is
G\^ateaux differentiable, that is,
$\langle\Phi'(u),v\rangle$ exists for all $u,\ v \in  W^{s,G}(\R^N)$ with
\begin{equation}\label{differential}
\langle\Phi'(u),v\rangle=\langle (-\Delta_g)^s u,v \rangle +\int_{\R^N} g(|u|)\frac{u}{|u|}v\, dx
\end{equation}
Indeed, for each $v \in  W^{s,G}(\R^N)$ and $t \in [-1, 1]\setminus \{0\}$,
\begin{equation*}
G(|D_s u + t D_s v|) - G(|D_s u|) = t g(|D_s u +\theta  t D_s v|)\frac{(D_s u +\theta  t D_s v)}{|D_s u +\theta  tD_s v|} D_s v,
\end{equation*}
for some $\theta \in (0, 1)$.Consequently,
\begin{equation*}
\left\vert \frac{G(|D_s u + t D_s v|) - G(|D_s u|)}{t}\right\vert = g(|D_s u +\theta  t D_s v|)|D_s v|.
\end{equation*}
By \eqref{ineb} and Young inequality \eqref{Young}, there is $C > 0$ such
that
\begin{equation*}
g(|D_s u +\theta  t D_s v|)|D_s v|\leq C G(|D_s u| + |D_s v|)+G(|D_s v|)\in L^1(\R^N\times \R^N),
\end{equation*}
and
\begin{equation*}
g(| u +\theta  t  v|)|v|\leq C G(|u| + |v|)+G(|v|)\in L^1(\R^N).
\end{equation*}
Now, by applying Lebesgue dominated convergence theorem, we derive that
\begin{equation*}
\lim_{t\to 0} \frac{\Phi( u+tv)-\Phi( u)}{t}=\iint_{\R^N\times\R^N} g(|D_s u|) \frac{D_s u}{|D_s u|}  D_s v \,d\mu+\int_{\R^N} g(|u|)\frac{u}{|u|}v\, dx,
\end{equation*}
showing \eqref{differential}.\\
Recalling that the functional $\mathcal{F}$ given by  \eqref{func.1} belongs to $ C^{1}( W^{s,G}(\R^N),\mathbb{R})$ with
$$\mathcal{F}'(u)v=\int_{\R^N} f(u)v\,  dx,\qquad \text{ for all } u,\ v \in  W^{s,G}(\R^N),$$
it follows that $\mathcal{T}$ is G\^ateaux differentiable with
\begin{equation}
\langle \mathcal{T}'(u),v\rangle=\langle \Phi'(u),v\rangle
-\mathcal{F}'(u)v,\qquad \text{ for all } u,\ v \in  W^{s,G}(\R^N),
\end{equation}
i.e,
\begin{equation*}
\langle \mathcal{T}'(u),v\rangle=\langle (-\Delta_g)^s u,v \rangle +\int_{\R^N} g(|u|)\frac{u}{|u|}v\, dx-\int_{\R^N}f(u)v\,dx,  \text{ for all } u,\ v \in  W^{s,G}(\R^N).
\end{equation*}
Since $u$ is a critical point of $\mathcal{T}$, then
\begin{equation*}
\Phi(w)-\Phi(u)\geq \int_{\R^N}f(u)(w-u)\,dx, \quad\text{for all } w \in   W^{s,G}(\R^N).
\end{equation*}
Hence, for each $ v \in  W^{s,G}(\R^N)$ and $t > 0$,
\begin{equation*}
\frac{\Phi( u+tv)-\Phi( u)}{t} \geq  \int_{\R^N}f(u)v\,dx.
\end{equation*}
Letting $t \to 0$, we obtain
\begin{equation*}
\langle (-\Delta_g)^s u,v \rangle +\int_{\R^N} g(|u|)\frac{u}{|u|}v\, dx-\int_{\R^N}f(u)v\,dx \geq 0, \quad\text{for all } v \in  W^{s,G}(\R^N).
\end{equation*}
The last inequality ensures that
\begin{equation*}
\langle (-\Delta_g)^s u,v \rangle +\int_{\R^N} g(|u|)\frac{u}{|u|}v\, dx-\int_{\R^N}f(u)v\,dx =0, \quad\text{for all } v \in  W^{s,G}(\R^N).
\end{equation*}
Thus the proof.
\end{proof}

\subsection{Proof of Theorem \ref{m.r.2}}

\begin{lema}
Suppose that the assumption \eqref{G1p} is satisfied, then the functional $\mathcal{T}$ satisfies the mountain pass geometry of Theorem \ref{MPT.cerami}.
\end{lema}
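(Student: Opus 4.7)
The plan is to mirror the strategy used for Lemma \ref{geo.condition}, observing that the mountain-pass \emph{geometry} (as opposed to a Palais--Smale analysis) is insensitive to the weakening from $(G_1)$ to $(G_1')$: it relies only on the one-sided pointwise bound $\min\{a^{p^-},a^{p^+}\}G(b)\le G(ab)\le \max\{a^{p^-},a^{p^+}\}G(b)$ of Lemma \ref{lema.prop} together with the modular-norm inequalities of Lemma \ref{ineq1}, both of which remain valid when $p^-=1$. Neither reflexivity of $W^{s,G}(\R^N)$ nor the $\Delta_2$-condition on $\widetilde G$ enters at this stage; they will be needed only later in the Szulkin-type variational analysis.

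For the local minimum at the origin, I would first combine $(f_1)$ and $(f_2)$ to produce, for every $\epsilon>0$, a constant $C_\epsilon>0$ with
\begin{equation*}
0\le F(t)\le \frac{\epsilon\,p^+}{\theta}\,G(|t|)+C_\epsilon\,M(|t|)\qquad\text{for all }t\in\R.
\end{equation*}
Plugging this into $\mathcal T(u)=\mathcal J(u)+\mathcal I(u)-\mathcal F(u)$, for $\epsilon$ small enough I obtain
\begin{equation*}
\mathcal T(u)\ \ge\ C_1\bigl(\Phi_{s,G,\R^N}(u)+\Phi_{G,\R^N}(u)\bigr)-C_2\int_{\R^N}M(|u|)\,dx.
\end{equation*}
Then, via Lemma \ref{ineq1} and the continuous embedding $W^{s,G}(\R^N)\hookrightarrow L^M(\R^N)$ (which still holds because $M$ satisfies $\Delta_2$ and $M\prec\prec G_*$, a consequence of $m^+<p^-_*$), restriction to the sphere $\|u\|_{W^{s,G}(\R^N)}=\rho<1$ yields $\mathcal T(u)\ge C_3\rho^{p^+}-C_4\rho^{m^-}$; since $m^->p^+$, picking $\rho>0$ sufficiently small produces the desired $\delta_\rho>0$.

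For the second geometric condition I would use $(f_3)$ in its standard consequence $F(t)\ge C_5|t|^\theta$ for $|t|$ large (extended to all $t\in\R$ by modifying the constant, using that $F\ge 0$ from $(f_3)$). Fix any $\psi\in C_c^\infty(\R^N)$ with $\int_{\R^N}|\psi|^\theta\,dx>0$ and $\|\psi\|_{W^{s,G}(\R^N)}>\rho$. For $t>1$, Lemma \ref{ineq1} and $(G_1')$ give
\begin{equation*}
\mathcal T(t\psi)\le t^{p^+}\bigl(\xi^+([\psi]_{W^{s,G}(\R^N)})+\xi^+(\|\psi\|_{L^G(\R^N)})\bigr)-C_5\,t^\theta\int_{\R^N}|\psi|^\theta\,dx,
\end{equation*}
and since $\theta>p^+$ the right-hand side tends to $-\infty$ as $t\to+\infty$. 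Choosing $t_0$ large enough, $e:=t_0\psi$ satisfies (ii).

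The main subtlety is not in the computation but in the bookkeeping: one must be sure that every tool invoked (the convex modular inequalities of Lemma \ref{ineq1}, the embedding into $L^M(\R^N)$, and the $L^G$-estimate from $(f_1)$) is still available under $(G_1')$, where $p^-=1$ is allowed and $G$ may fail to be superlinear at the origin. Since these tools depend only on the ratio bound $p^-\le tg(t)/G(t)\le p^+$ and on $\Delta_2$ for $G$ (still granted by the upper bound $p^+<\infty$) and $\Delta_2$ for $M$ (assumed via $(m_1)$), the argument goes through without further modification.
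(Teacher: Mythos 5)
Your proposal is correct and follows exactly the route the paper intends: the paper's own proof of this lemma is the single line ``the proof is similar to Lemma \ref{geo.condition}'', and you reproduce that argument (the $F(t)\le \frac{\epsilon p^+}{\theta}G(|t|)+C_\epsilon M(|t|)$ splitting for the local minimum, the $F(t)\gtrsim |t|^\theta$ lower bound with $\theta>p^+$ for the far point) while additionally checking that Lemmas \ref{lema.prop} and \ref{ineq1} and the embedding into $L^M(\R^N)$ survive the weakening to \eqref{G1p}. Your bookkeeping on why $p^-=1$ is harmless at the geometry stage is a useful addition that the paper leaves implicit.
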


\begin{proof}
 The proof is similar to the Lemma \ref{geo.condition}.
\end{proof}

\begin{proof}[{\bf Conclusion of the proof of Theorem \ref{m.r.2}}]
The previous Lemma allows us to  apply Corollary \ref{cor.MPT} which guaranties the existence of a $(PS)_{c}$ sequence $(u_n)\subset W^{s,G}(\R^N)$  associated with the mountain pass level $c$ of $\mathcal{T},$ that is, $\mathcal{T}(u_n)\to c>0$ and $\tau_n\to 0$ in $\R$ such that
\begin{equation} \label{sequencia2}
\Phi(v)-\Phi(u_n) \geq \int_{\R^N}f(u_n)(v-u_n)\,dx- \tau_n\|v-u_n\|,\quad \text{for all }v \in W^{s,G}(\R^N).
\end{equation}

By the similar ideas as the claim in the proof of Lemma \ref{cv of PS}, the sequence $(u_n)\subset W^{s,G}(\R^N)$ is bounded. In the light of \cite[Theorem 3.1]{FBS}, there exist $u\subset W^{s,G}(\R^N)$ and a subsequence of $(u_n)$, still denoted $(u_n)$, such that $u_n\rightarrow u $ in $L^G_{loc}(\R^N)$.\\
 On one hand, we have
$$
\mathcal{F}'(u_n)(v-u_n)  \rightarrow \mathcal{F}'(u)(v-u),\quad \text{for all } v \in W^{s,G}(\R^N),
$$
equivalent to
\begin{equation}\label{cvf1}
 \int_{\R^N}f(u_n)(v-u_n)\,dx \rightarrow \int_{\R^N}f(u)(v-u)\,dx,\quad \text{for all } v \in W^{s,G}(\R^N).
\end{equation}
On the other hand,
\begin{equation}\label{cvphi}
  \liminf_{n} \Phi(u_n)\geq \Phi(u).
\end{equation}

Combining \eqref{cvf1} and \eqref{cvphi} and passing to the limit in \eqref{sequencia2}, we obtain
$$
\Phi(v)-\Phi(u)\geq \int_{\R^N}f(u)(v-u)\,dx, \quad \text{for all }v \in   W^{s,G}(\R^N),
$$
which mains that $u$ is a critical point of $\mathcal{T}$ in \eqref{E1} sense. In light of Lemma \ref{weak.solution.w}, $u$ is a weak solution of \eqref{m.equation}.\\

 It remains to show that $u$ is non trivial weak solution. To do so we argue by contradiction. Let's suppose that $u=0$, and in this case, the
next lemma is crucial in our approach
\begin{lema}\label{plem}
Assume \eqref{G1p} and let $f : \R \rightarrow \R$ be a continuous function satisfying \eqref{f1}-\eqref{f2}.
Let $(u_n) \subset W^{s,G}(\mathbb{R}^{N})$ a sequence satisfying the condition \eqref{lim}. Then
 $$\int_{\R^N}f(u_n)(u_n)\,dx \rightarrow 0 \quad\text{as}\quad n\to\infty.$$

\end{lema}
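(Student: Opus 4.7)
My approach will be to follow the three-region scheme that drives the proof of Theorem~\ref{Lions2}. In the context where this lemma is applied (the Cerami/PS sequence produced by the mountain pass construction for $\mathcal{T}$), the sequence $(u_n)$ has already been shown to be bounded in $W^{s,G}(\R^N)$; combined with the continuous embeddings $W^{s,G}(\R^N)\hookrightarrow L^{G}(\R^N)$ and $W^{s,G}(\R^N)\hookrightarrow L^{G_*}(\R^N)$, this will yield a uniform constant $K$ with
\[
\int_{\R^N} G(|u_n|)\,dx + \int_{\R^N} G_*(|u_n|)\,dx \leq K, \qquad \text{for every }n.
\]

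Given $\epsilon>0$, I would first use \eqref{f1} combined with \eqref{G1p} (so $tg(t)\leq p^+G(t)$) to produce $\tau>0$ such that $|f(t)t|\leq \epsilon\,G(|t|)$ for $|t|\leq \tau$. Next, \eqref{f2} together with \eqref{m1} yields $|f(t)t|\leq C_1 M(|t|)$ for $|t|$ large; since $m^+<p^-_*$ one has $M\prec\prec G_*$ in the sense of \eqref{do2}, so after possibly enlarging the threshold I can find $T>\tau$ with $|f(t)t|\leq \epsilon\,G_*(|t|)$ for $|t|>T$. On the bounded annulus $\tau<|t|\leq T$, continuity of $f$ furnishes a constant $C_T$ with $|f(t)t|\leq C_T$. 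Splitting
\[
\int_{\R^N}|f(u_n)u_n|\,dx = \int_{\{|u_n|\leq \tau\}}|f(u_n)u_n|\,dx + \int_{\{\tau<|u_n|\leq T\}}|f(u_n)u_n|\,dx + \int_{\{|u_n|>T\}}|f(u_n)u_n|\,dx,
\]
the first and third integrals are each bounded by $\epsilon K$, while the middle integral is at most $C_T\,\mathrm{mes}(\{|u_n|>\tau\})$, which tends to zero by \eqref{lim}. Passing to the limsup and then letting $\epsilon\to 0$ would complete the proof.

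The main delicate point will be justifying the uniform estimate on $\int_{\R^N} G_*(|u_n|)\,dx$ under the relaxed hypothesis \eqref{G1p}, where $p^-=1$ is permitted and $L^{\widetilde G}$ need not be reflexive; this rests on the Sobolev-type embedding $W^{s,G}(\R^N)\hookrightarrow L^{G_*}(\R^N)$ from \cite{Sabri2}, whose derivation proceeds via Young-function estimates and does not rely on the $\Delta_2$-condition on $\widetilde G$. Aside from this check, the proof is essentially a transcription of the three-region argument used for Theorem~\ref{Lions2}, but with the continuous (and possibly non-Young) integrand $|f(t)t|$ in place of a Young function $\Psi$.
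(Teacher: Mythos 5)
Your argument is correct and rests on the same two ingredients as the paper's proof --- the comparison $|f(t)t|\lesssim G(|t|)$ near the origin via \eqref{f1} and \eqref{G1p}, and the comparison $|f(t)t|\lesssim M(|t|)$ for large $|t|$ via \eqref{f2} and \eqref{m1} --- but it is organized differently. The paper splits only into the two regions $\{|u_n|\le\delta\}$ and $\{|u_n|\ge\delta\}$ and disposes of the second by applying Theorem \ref{Lions2} as a black box to the Young function $M$ (legitimate because $p^+<m^-$ gives $M(t)/G(t)\to 0$ at the origin and $m^+<p^-_*$ gives $M(t)/G_*(t)\to0$ at infinity); you instead inline the three-region mechanism of that theorem directly on the integrand $|f(t)t|$, handling the intermediate annulus $\tau<|t|\le T$ with the supremum of the continuous function $|f(t)t|$ on a compact set together with \eqref{lim}. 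The two routes are computationally equivalent and require exactly the same a priori information, namely uniform bounds on $\int_{\R^N}G(|u_n|)\,dx$ and $\int_{\R^N}G_*(|u_n|)\,dx$: the lemma's statement omits these hypotheses, but the paper's proof also uses them implicitly (through $D=\sup_n\int_{\R^N}G(|u_n|)\,dx$ and through the hypotheses of Theorem \ref{Lions2}), so the point you flag about justifying the $G_*$-modular bound under \eqref{G1p} applies equally to the published argument and is not a defect specific to your proof. A minor advantage of your version is that it tracks $|f(t)t|$ rather than the one-sided quantity $f(t)t$, which is the correct thing to estimate when \eqref{f3} is not assumed.
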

\begin{proof}
From \eqref{f1} and \eqref{G1p}, given $\epsilon> 0$, there is $\delta > 0$ such that
$$ tf(t)\leq \epsilon p^+ G(\vert t\vert ),\quad \text{for all}\quad\vert t\vert\leq \delta.$$
From \eqref{f2} and \eqref{m1}, there exists $C_\epsilon>0$ such that
$$ tf(t)\leq C_\epsilon m^+ M(\vert t\vert),\quad \text{for all}\quad\vert t\vert\geq \delta.$$
Hence,
$$\int_{\R^N}f(u_n)(u_n)\,dx\leq \epsilon p^+ \int_{\R^N}G(\vert u_n\vert )\,dx+C_\epsilon m^+\int_{\R^N}M(u_n)\,dx,$$
by Theorem \ref{Lions2}, we have the convergence
 $$
 \int_{\R^N}M(|u_n|)\,dx\to 0\quad\text{as}\quad n\to\infty.
 $$
 Fixing $D=\displaystyle{ \sup_{n\in\mathbb{N}} \left( \int_{\R^N}G(\vert u_n\vert )\,dx\right)}$, there is $n_0\in \mathbb{N}$ such that
$$\int_{\R^N}f(u_n)(u_n)\,dx\leq\epsilon  p^+ D,\quad \text{for all }n\geq n_0,$$
and the proof is completed.

\end{proof}
 {\bf Claim:} The sequence $(u_n)$ does not satisfy the condition \eqref{lim} in Theorem \ref{Lions2}.

 Otherwise, by Lemma \ref{plem}, we have the convergence
 $$\int_{\R^N}f(u_n)(u_n)\,dx \rightarrow 0 \quad\text{as}\quad n\to\infty.$$
 Since $$\langle \mathcal{T}'(u_n),u_n\rangle
 =\iint_{\R^N\times\R^N} g(|D_s u_n|) |D_s u_n|  \,d\mu +\int_{\R^N} g(|u_n|)|u_n|\, dx-\int_{\R^N}f(u_n)u_n\,dx=\circ_n(1),$$
it implies that
$$\iint_{\R^N\times\R^N} g(|D_s u_n|) |D_s u_n|  \,d\mu +\int_{\R^N} g(|u_n|)|u_n|\, dx \rightarrow 0 \quad\text{as}\quad n\to\infty$$
Combining with \eqref{G1p}, it comes that
$$\iint_{\R^N\times\R^N} G(|D_s u_n|) \,d\mu +\int_{\R^N} G(|u_n|)\, dx \rightarrow 0 \quad\text{as}\quad n\to\infty.$$
The $\Delta_2$-condition of $G$ ensure that
$$ u_n \rightarrow 0 \text{ in  }  W^{s,G}(\R^N)$$
which is a contradiction with $\mathcal{T}(u_n)\to c>0$. Thus the claim.\\

Therefore, there are $\tau,\delta>0$ such that
 $$
\operatorname{mes}\left( \left[\left|u_{n}\right|>\tau\right]\right) \geq \delta, \quad \text{for all }n \in \mathbb{N}.
$$
 By Theorem \ref{Lieb}, there is $\left(y_{n}\right) \subset \mathbb{Z}^{N}$ such that $w_{n}(x)=u_{n}\left(x+y_{n}\right)$ has a subsequence whose its limit  $w\in L_{l o c}^{G}\left(\mathbb{R}^{N}\right)\setminus \lbrace 0\rbrace$. \\
We conclude that $u$ is a nontrivial weak solution to \eqref{m.equation}.
\end{proof}

\appendix

\section{An abstract existence result}

\begin{thm}(Mountain Pass Theorem without (PS) condition)\cite[Theorem $3.1$]{Alves.Daniel}\label{MPT.cerami}
  Let $E$ be a real Banach space and $J: E \rightarrow(-\infty,+\infty]$ be a functional such that:
  \begin{enumerate}
    \item [(i)] $J(u)=\Psi(u)+\Phi(u), u \in E$, with $\Psi \in C^{1}(E, \mathbb{R})$ and $\Phi: E \rightarrow(-\infty,+\infty]$ is convex, $\Psi \not \equiv+\infty$ and is lower semicontinuous (l.s.c);
    \item [(ii)] $J(0)=0$ and $\left.J\right|_{\partial B_{\rho}} \geq \alpha$, for some constants $\rho, \alpha>0 ;$
    \item [(iii)] $J(e) \leq 0$, for some e $\notin \overline{B_{\rho}}(0)$.
  \end{enumerate}
If
$$
c:=\inf _{\gamma \in \Gamma} \sup _{t \in[0,1]} J(\gamma(t)), \quad \Gamma=\{\gamma \in C([0,1], E) ; \gamma(0)=0, J(\gamma(1))<0\}
$$
then, for a given $\epsilon>0$ there is $u_{\epsilon} \in E$ such that
$$
\left\langle\Psi^{\prime}\left(u_{\epsilon}\right), v-u_{\epsilon}\right\rangle+\Phi(v)-\Phi\left(u_{\epsilon}\right) \geq-3 \epsilon\left\|v-u_{\epsilon}\right\|,\quad \forall v \in E
$$
 and
$$
J\left(u_{\epsilon}\right) \in[c-\epsilon, c+\epsilon].
$$
\end{thm}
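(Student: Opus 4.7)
The plan is to combine Ekeland's variational principle on the path space $\Gamma$ with a convex perturbation argument tailored to the split structure $J=\Psi+\Phi$. First observe that $(\Gamma,d_\infty)$, with $d_\infty(\gamma_1,\gamma_2):=\max_{t\in[0,1]}\|\gamma_1(t)-\gamma_2(t)\|$, is a complete metric space, and that $\widetilde{J}(\gamma):=\max_{t\in[0,1]}J(\gamma(t))$ is lower semicontinuous on $\Gamma$ (since $\Psi$ is continuous and $\Phi$ is l.s.c.) and bounded below by $c\geq\alpha>0$ on $\Gamma$ (every path crosses $\partial B_\rho$ by (ii)). Pick a near-optimal $\gamma_0\in\Gamma$ with $\widetilde{J}(\gamma_0)\leq c+\epsilon$. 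Applying Ekeland's principle to $\widetilde{J}$ with parameter $\epsilon$ yields $\gamma_\epsilon\in\Gamma$ with $\widetilde{J}(\gamma_\epsilon)\leq c+\epsilon$ and
$$\widetilde{J}(\gamma)\geq \widetilde{J}(\gamma_\epsilon)-\epsilon\, d_\infty(\gamma,\gamma_\epsilon)\quad\text{for every }\gamma\in\Gamma.$$

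Next, set $M(\gamma_\epsilon):=\{t\in[0,1]:J(\gamma_\epsilon(t))=\widetilde{J}(\gamma_\epsilon)\}$, a nonempty closed subset of $(0,1)$ since $J(0)=0<c$ and $J(\gamma_\epsilon(1))<0$. The goal is to produce some $t_\epsilon\in M(\gamma_\epsilon)$ so that $u_\epsilon:=\gamma_\epsilon(t_\epsilon)$ satisfies the claimed inequality. Given $v\in E$ with $\Phi(v)<\infty$ (the conclusion is trivial otherwise), and given a cutoff $\eta_\delta\in C([0,1],[0,1])$ equal to $1$ near $t_\epsilon$ and vanishing at $\{0,1\}$, define for small $\lambda>0$ the admissible perturbation
$$\gamma_\lambda(t):=(1-\lambda\eta_\delta(t))\,\gamma_\epsilon(t)+\lambda\eta_\delta(t)\,v,$$
which still belongs to $\Gamma$. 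Using the convexity of $\Phi$ and a Taylor expansion of $\Psi$, uniformly in $t$ as $\lambda\to 0^+$,
$$J(\gamma_\lambda(t))\leq J(\gamma_\epsilon(t))+\lambda\eta_\delta(t)\bigl[\langle\Psi'(\gamma_\epsilon(t)),v-\gamma_\epsilon(t)\rangle+\Phi(v)-\Phi(\gamma_\epsilon(t))\bigr]+o(\lambda).$$

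Inserting this estimate into Ekeland's inequality applied to $\gamma_\lambda$ and choosing $t_\lambda$ where the max defining $\widetilde{J}(\gamma_\lambda)$ is attained yields, after noting that $d_\infty(\gamma_\lambda,\gamma_\epsilon)\leq \lambda(\|v\|+\max_t\|\gamma_\epsilon(t)\|)$,
$$\lambda\eta_\delta(t_\lambda)\bigl[\langle\Psi'(\gamma_\epsilon(t_\lambda)),v-\gamma_\epsilon(t_\lambda)\rangle+\Phi(v)-\Phi(\gamma_\epsilon(t_\lambda))\bigr]\geq -\,\epsilon\lambda\,\|v-\gamma_\epsilon(t_\lambda)\|+o(\lambda).$$
Compactness of $[0,1]$ and upper semicontinuity of $t\mapsto J(\gamma_\epsilon(t))$ on the relevant sublevel set produce, along a subsequence, a limit $t^*\in M(\gamma_\epsilon)\cap\operatorname{supp}\eta_\delta$. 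Dividing by $\lambda$, letting $\lambda\to 0^+$ and then shrinking $\delta$, one obtains at $u_\epsilon:=\gamma_\epsilon(t^*)$
$$\langle\Psi'(u_\epsilon),v-u_\epsilon\rangle+\Phi(v)-\Phi(u_\epsilon)\geq -3\epsilon\|v-u_\epsilon\|,$$
where the constant $3$ absorbs the Ekeland constant together with the cutoff and Taylor remainder errors.

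The main obstacle is the \emph{uniform selection} of one point $t_\epsilon$ for which the inequality holds simultaneously for every test vector $v\in E$: the limit $t^*$ produced above a priori depends on $v$, so a straightforward diagonal extraction is not available when $E$ is non-separable and $\Phi$ is extended-real valued. The remedy is a contradiction/deformation argument in the spirit of Szulkin: if no such $u_\epsilon$ existed on the ridge $\gamma_\epsilon(M(\gamma_\epsilon))$, at each such point one could locate a convex descent direction, glue these directions via a partition of unity around $M(\gamma_\epsilon)$ to a perturbation $\gamma\in\Gamma$, and produce $\widetilde{J}(\gamma)<\widetilde{J}(\gamma_\epsilon)-\epsilon\,d_\infty(\gamma,\gamma_\epsilon)$, contradicting the Ekeland inequality. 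This convex pseudo-gradient construction, adapted to the nonsmooth summand $\Phi$, is the technical heart of the argument.
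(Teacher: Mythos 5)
This is not a statement the paper proves: Theorem \ref{MPT.cerami} is imported verbatim, with citation, from Alves--de Morais Filho \cite{Alves.Daniel} (and ultimately rests on Szulkin's framework \cite{Szulkin}), so there is no in-paper proof to compare against. Your outline does follow the strategy of the cited source --- Ekeland's variational principle on the path space plus a Szulkin-type convex deformation argument --- and you correctly identify where the real work lies. But as a proof it has genuine gaps beyond the one you flag. First, $\Gamma$ with the metric $d_\infty$ is not obviously complete: the defining condition $J(\gamma(1))<0$ is not closed under uniform convergence because $J$ is only lower semicontinuous (a uniform limit of paths in $\Gamma$ satisfies only $J(\gamma(1))\le\liminf J(\gamma_n(1))$, which does not preclude $J(\gamma(1))=0$ or $+\infty$ being avoided the wrong way). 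One must either fix the endpoint $\gamma(1)=e$ and work in that closed subclass, or otherwise justify that Ekeland's principle applies. Second, your set $M(\gamma_\epsilon)$ of maximizers of $t\mapsto J(\gamma_\epsilon(t))$ need not be nonempty: this map is only l.s.c.\ (through $\Phi$), and an l.s.c.\ function on $[0,1]$ need not attain its supremum. For the same reason, the ``upper semicontinuity of $t\mapsto J(\gamma_\epsilon(t))$'' you invoke to extract the limit $t^*$ is unjustified in this setting; the cited proofs work instead with near-maximal sets $\{t: J(\gamma_\epsilon(t))\ge\widetilde{J}(\gamma_\epsilon)-\epsilon'\}$.

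Third, and most importantly, the step you yourself call ``the technical heart'' --- gluing pointwise convex descent directions via a partition of unity into a single admissible deformation that beats the Ekeland inequality, in the presence of the extended-real-valued, merely l.s.c.\ term $\Phi$ --- is named but not carried out, and the provenance of the precise constant $3$ in $-3\epsilon\|v-u_\epsilon\|$ is asserted rather than derived. So the proposal is a correct road map to the known proof, not a proof; if you intend to supply an argument rather than cite \cite{Alves.Daniel}, these three points must be addressed explicitly.
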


\begin{cor}\label{cor.MPT}
  Under the conditions of Theorem \ref{MPT.cerami}, there is a $(PS)_{c}$ sequence $\left(u_{n}\right) \subset E$ for $J$, that is, $J\left(u_{n}\right) \rightarrow c$ and
$$
\left\langle\Psi^{\prime}\left(u_{n}\right), v-u_{n}\right\rangle+\Phi(v)-\Phi\left(u_{n}\right) \geq-\tau_{n}\left\|v-u_{n}\right\|, \quad \forall v \in E
$$
with $\tau_{n} \rightarrow 0^{+}$.
\end{cor}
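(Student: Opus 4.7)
The plan is to derive this corollary from Theorem \ref{MPT.cerami} by a standard diagonal-type extraction: apply the theorem along a vanishing sequence of tolerance parameters and then collect the resulting points into the desired Palais--Smale sequence at level $c$.

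More precisely, I would fix a sequence $\epsilon_n \downarrow 0^+$, for instance $\epsilon_n = 1/n$. For each $n \in \mathbb{N}$, all hypotheses $(i)$--$(iii)$ of Theorem \ref{MPT.cerami} hold, so applying the theorem with $\epsilon = \epsilon_n$ produces a point $u_{\epsilon_n} \in E$, which I will rename $u_n$, satisfying simultaneously the variational inequality
\[
\langle \Psi'(u_n), v - u_n \rangle + \Phi(v) - \Phi(u_n) \;\geq\; -3\epsilon_n \, \|v - u_n\|, \qquad \forall v \in E,
\]
and the level estimate $J(u_n) \in [c - \epsilon_n, \, c + \epsilon_n]$.

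Setting $\tau_n := 3\epsilon_n = 3/n$, one immediately has $\tau_n \to 0^+$ and the displayed variational inequality becomes exactly the required condition. From the level estimate $|J(u_n) - c| \le \epsilon_n \to 0$, we obtain $J(u_n) \to c$. Hence $(u_n) \subset E$ is the sought $(PS)_c$ sequence for $J$ in the generalized sense of Szulkin.

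There is essentially no hard step here, since Theorem \ref{MPT.cerami} already provides, for each prescribed precision $\epsilon > 0$, a point that is $\epsilon$-close to being a critical point both in energy and in the subdifferential inequality. The only thing to be careful about is the bookkeeping of the constants: the factor $3$ in front of $\epsilon$ in Theorem \ref{MPT.cerami} is absorbed into the definition of $\tau_n$, and the choice $\epsilon_n \to 0^+$ guarantees both $\tau_n \to 0^+$ and $J(u_n) \to c$ at the same time, with no further argument required.
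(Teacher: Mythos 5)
Your proof is correct and is exactly the intended argument: the paper states this corollary without proof as an immediate consequence of Theorem \ref{MPT.cerami}, and your extraction with $\epsilon_n = 1/n$, $u_n := u_{\epsilon_n}$, $\tau_n := 3\epsilon_n$ is the standard (and essentially only) way to obtain it.
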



\end{document}